  \newtheorem{theorem}{Theorem}[section]
  \newtheorem{corollary}[theorem]{Corollary}
  \newtheorem{proposition}[theorem]{Proposition}
  \theoremstyle{definition}
  \newtheorem{definition}[theorem]{Definition}
  \numberwithin{equation}{section}
  \title[Aggregate Preferred Correspondence and
  the Existence of a MREE]
  {Aggregate Preferred Correspondence and \\ the
  Existence of a Maximin REE}
  \author[A. Bhowmik]{Anuj Bhowmik}
  \address{School of Computing and Mathematical Sciences,
  Auckland University of Technology, Private Bag 92006, Auckland
  1142, New Zealand}
  \email{anuj.bhowmik@aut.ac.nz}
  \author[J. Cao]{Jiling Cao}
  \address{School of Computing and Mathematical Sciences,
  Auckland University of Technology, Private Bag 92006, Auckland
  1142, New Zealand}
  \email{jiling.cao@aut.ac.nz}
  \author[N.~C. Yannelis]{Nicholas C. Yannelis}
  \address{Department of Economics, Henry B. Tippie College of
  Business, The University of Iowa, IA 52242-1994, USA, and
  Economics - School of Social Sciences, The University of
  Manchester, Manchester M13 9PL, UK}
  \email{nicholasyannelis@gmail.com}
  \thanks{\hspace{-1.66em} \emph{JEL Classification Numbers:}
  D51, D82.}
  \thanks{\noindent \emph{Keywords}. Aggregate correspondence; Budget
  correspondence; Differential information; Hausdorff continuous;
  Lower measurable; Maximin rational expectations allocation;
  Walrasian equilibrium.}
\begin{document}

  \begin{abstract}
  In this paper, a general model of a pure exchange differential
  information economy is studied. In this economic model, the space
  of states of nature is a complete probability measure space, the
  space of agents is a measure space with a finite measure, and
  the commodity space is the Euclidean space. Under appropriate
  and standard assumptions on agents' characteristics, results
  on continuity and measurability of the aggregate preferred
  correspondence in the sense of Aumann in \cite{Aumann:66}
  are established. These results together with other techniques
  are then employed to prove the existence of a maximin rational
  expectations equilibrium (maximin REE) of the economic model.
  \end{abstract}

  \maketitle

  \section{Introduction} \label{sec:intro}

  When traders come to a market with different information about the
  items to be traded, the resulting market prices may reveal to some
  traders information originally available only to others. The possibility
  for such inferences rests upon traders having expectations of how
  equilibrium prices are related to initial information. This endogenous
  relationship was considered by Radner in his seminal paper
  \cite{Radner:79}, where he introduced the concept of a rational
  expectations equilibrium by imposing on agents the Bayesian (subjective
  expected utility) decision doctrine. Under the Bayesian decision
  making, agents maximize their subjective expected utilities
  conditioned on their own private information and also on the
  information that the equilibrium prices generate. The resulting
  equilibrium allocations are measurable with respect to the private
  information of each individual and also with respect to the information
  the equilibrium prices generate and clear the market for every state
  of nature. In both papers \cite{Allen:81} and \cite{Radner:79},
  conditions on the existence of a Bayesian rational expectations
  equilibrium (REE) were studied and some generic existence results
  were proved. However, Kreps \cite{Kreps:77} provided an example that
  shows that the Bayesian REE may not exist universally. In addition,
  a Bayesian REE may fail to be fully Pareto optimal and incentive
  compatible and may not be implementable as a perfect Bayesian
  equilibrium of an extensive form game, refer to Glycopantis et al.
  \cite{Glycopantis:2005} for more details.

  \medskip
  In a recent paper \cite{de Castro-Pesce-Yannelis:11}, de Castro et al.
  introduced a new notion of REE by a careful examination of Krep's
  example of the nonexistence of the Bayesian REE. In this formulation,
  the Bayesian decision making adopted in the papers of Allen \cite{Allen:81} and Radner \cite{Radner:79} was abandoned and
  replaced by the maximin expected utility (MEU) (see Gilboa and
  Schmeidler \cite{Gilboa:89}). In this new setup, agents maximize
  their MEU conditioned on their own private
  information and also on the information the equilibrium prices have
  generated. Contrary to the Bayesian REE, the resulting maximin
  REE may not be measurable with respect to the private information
  of each individual or the information that the equilibrium prices generate.

  \medskip
  Although Bayesian REE and maximin REE coincide in some special cases
  (e.g., fully revealing Bayesian REE and maximin REE), these two
  concepts are in general not equivalent. Nonetheless, the introduction
  of the MEU into the general equilibrium modeling enables de Castro
  et al. to prove that the maximin REE exists universally under the
  standard continuity and concavity assumptions on the utility
  functions of agents.
  Furthermore, they showed that the maximin REE is incentive compatible
  and efficient. Note that in the economic model considered in
  \cite{de Castro-Pesce-Yannelis:11}, it is assumed that there are
  finitely many states of nature and finitely many agents,
  and the commodity space is finite-dimensional. Thus, one of open
  questions is whether their existence theorem can be extended to more
  general cases. The main motivation of this paper is to tackle this
  question. In this paper, a general model of a pure exchange differential
  information economy is studied. In this economic model, the space
  of states of nature is a complete probability measure space,
  and the space of
  agents is a measure space with a finite measure. Under appropriate
  and standard assumptions on agents' characteristics, the existence
  of a maximin rational expectations equilibrium (maximin REE) of this
  general economic model is established.

  \medskip
  This paper is organized as follows. In Section \ref{sec:model},
  the economic model, some notation and assumptions are introduced
  and explained. Section \ref{sec:aggregate}, some results on
  continuity and measurability of agents' aggregate preferred
  correspondence in the sense of Aumann in \cite{Aumann:66} are
  established. These results are key techniques employed to prove
  the existence of a maximin rational expectations equilibrium
  of the economic model in Section \ref{sec:existence}. In Section
  \ref{sec:conclusion}, results and techniques appeared in this
  paper are compared with relevant results in the literature.
  Finally, some mathematical preliminaries and key facts used in
  this paper are presented in the appendix appeared at the end
  of the paper.

  \section{Differential information economies} \label{sec:model}

  In this paper, a model of a pure exchange economy $\mathscr E$
  with differential information is considered. The space of states
  of nature is a complete probability measure space $(\Omega,
  \mathscr F, \nu)$. The space of agents is a measure space
  $(T, \Sigma, \mu)$ with a finite measure $\mu$. The commodity
  space is the $\ell$-dimensional Euclidean space ${\mathbb
  R}^\ell$, and the positive cone ${\mathbb R}^\ell_+$ is the
  \emph{consumption set} for each agent $t\in T$ in every state
  of nature $\omega \in \Omega$. Each agent $t\in T$ is
  associated with her/his \emph{characteristics} $({\mathscr
  F}_t, U(t,\cdot, \cdot), a(t,\cdot), q_t)$, where ${\mathscr
  F}_t$ is the $\sigma$-algebra generated by a partition $\Pi_t$
  of $\Omega$ representing the \emph{private information} of
  $t$; $U(t,\cdot,\cdot):\Omega \times {\mathbb R}^\ell_+ \to
  \mathbb R$ is a \emph{random utility function} of $t$; $a(t,
  \cdot): \Omega\rightarrow \mathbb R^\ell_+$ is the \emph{random
  initial endowment} of $t$ and $q_t$ is a probability measure
  on $\Omega$ giving the \emph{prior} of $t$. The economy extends
  over two time periods $\tau = 1, 2$. At the ex ante stage
  ($\tau=0$), only the above description of the economy is a
  common knowledge. At the stage $\tau=1$, agent $t$ only knows
  that the realized state of nature belongs to the event
  ${\mathscr F}_t(\omega^\ast)$, where $\omega^\ast$ is the
  true state of nature at $\tau=2$. With this information (or
  with the information acquired through prices), agents trade.
  At the ex post stage ($\tau=2$), agents execute the trades
  according to the contract agreed at period $\tau=1$, and
  consumption takes place. The coordinate-wise order on
  ${\mathbb R}^\ell$ is denoted by $\leq$
  and the symbol $x\gg 0$ means that $x$ is an interior point
  of ${\mathbb R}^\ell_+$, and $x>0$ means that $x\ge 0$ but
  $x\ne 0$. Let $L_1 \left(\mu,{\mathbb R}^\ell_+\right)$ be
  the set of Lebesgue integrable functions from $T$ to
  ${\mathbb R}^\ell_+$. An \emph{allocation} in $\mathscr E$
  is a function $f: T\times \Omega\rightarrow {\mathbb
  R}^\ell_+$ such that $f(\cdot,\omega)\in L_1 \left(\mu,
  {\mathbb R}^\ell_+\right)$ for all $\omega \in\Omega$. An
  allocation $f$ in $\mathscr E$ is \emph{feasible} if
  \[
  \int_T f(\cdot, \omega) d\mu = \int_T a(\cdot, \omega)
  d\mu
  \]
  for all $\omega \in \Omega$.

  \bigskip
  The following standard assumptions on agents' characteristics
  shall be used:

  \medskip
  \noindent
  (${\bf A}_1$) The initial endowment function $a: T\times
  \Omega\rightarrow \mathbb R^\ell_+$ is jointly measurable
  such that $\int_T a(\cdot, \omega)d\mu\gg 0$ for each $\omega
  \in\Omega$.

  \medskip
  \noindent
  (${\bf A}_2$) $U(\cdot,\cdot, x)$ is jointly measurable
  for all $x \in \mathbb R^\ell_+$ and $U(t,\omega, \cdot)$
  is continuous for all $(t, \omega)\in T\times \Omega$.

  \medskip
  \noindent
  (${\bf A}_3$) For each $(t, \omega) \in T\times \Omega$,
  $U(t, \omega, \cdot)$ is monotone in the sense that if $x,
  y\in\mathbb R^\ell_+$ with $y> 0$, then $U(t,\omega, x+y)
  >U(t,\omega, x)$.

  \medskip
  \noindent
  (${\bf A}_4$) For each $(t, \omega) \in T\times \Omega$,
  $U(t, \omega, \cdot)$ is concave.

  \medskip
  Let
  \[
  \Delta= \left\{p \in \mathbb R^\ell_+: p\gg 0 \mbox{ and }
  \sum_{h= 1}^\ell p^h=1\right\}.
  \]
  Then, each element $p\in \Delta$ is viewed as a (normalized)
  \emph{price system} in the deterministic case. The
  \emph{budget correspondence}
  $B:T\times \Omega\times \Delta \rightrightarrows \mathbb
  R^\ell_+$ is defined by
  \[
  B(t, \omega, p)= \left\{x\in \mathbb
  R^\ell_+: \langle p, x\rangle\le \langle p, a(t,\omega)
  \rangle\right\}
  \]
  for all $(t,\omega,p) \in T \times \Omega \times \Delta$.
  Obviously, $B$ is non-empty and closed-valued. For each $\omega
  \in \Omega$, by Theorem 2 in \cite[p.151]{Hildenbrand:74},
  there are $p(\omega)\in \Delta$ and an allocation
  $f(\cdot, \omega)$ such that $(f(\cdot, \omega), p(\omega))$
  is a Walrasian equilibrium of the deterministic economy
  $\mathscr E(\omega)$, given by
  \[
  \mathscr E(\omega) = ((T,\Sigma, \mu); \mathbb R^\ell_+;
  (U(t, \omega, \cdot), a(t,\omega)): t\in T).
  \]
  Define a function $\delta: \Delta\to \mathbb R_+$ by
  \[
  \delta(p)= \min\left\{p^h: 1\le h \le\ell \right\},
  \]
  where $p= (p^1,..., p^\ell)\in \Delta$. For any $(t, \omega,
  p)\in T\times \Omega\times \Delta$, let
  \[
  \gamma(t, \omega, p)= \frac{1}
  {\delta(p)}\sum_{h=1}^\ell a^h(t, \omega)
  \]
  and
  \[
  b(t, \omega,
  p)= (\gamma(t, \omega, p), ..., \gamma(t, \omega, p)).
  \]
  Define $X: T\times \Omega\times \Delta
  \rightrightarrows \mathbb R^\ell_+$ by
  \[
  X(t,\omega,p)=\{x\in \mathbb R^\ell_+: x\le b(t,\omega,p)\}
  \]
  for all $(t, \omega, p)\in
  T\times \Omega\times \Delta$. Note that $X$ is non-empty,
  compact- and convex-valued such that $B(t, \omega, p)
  \subseteq X(t, \omega, p)$ for all $(t, \omega, p)\in T
  \times \Omega\times \Delta$. It can be readily verified
  that for every $(t, \omega) \in T \times \Omega$, the
  correspondence $X(t,\omega, \cdot): \Delta \rightrightarrows
  {\mathbb R}^\ell_+$ is Hausdorff continuous. Define two
  correspondences $C,\ C^{X}: T\times
  \Omega\times\Delta\rightrightarrows \mathbb R^\ell_+$ by
   \[
  C(t, \omega, p)= \left\{y\in \mathbb R^\ell_+: U(t,
  \omega, y)\ge U(t, \omega, x) \mbox{ for all } x\in
  B(t, \omega, p)\right\}
  \]
  and
  \[
  C^X(t, \omega, p)= C(t, \omega, p)\cap X(t, \omega, p).
  \]
  Obviously,
  \[
  B(t, \omega, p)\cap C(t, \omega, p)= B(t, \omega, p)\cap
  C^X (t, \omega, p)
  \]
  holds for all $(t, \omega, p)\in T\times \Omega \times
  \Delta$. Note that under (${\bf A}_2$), $U(t, \omega, \cdot)$
  is continuous on the non-empty compact set $B(t,\omega, p)$.
  Thus, one has
  \[
  B(t, \omega, p)\cap C(t, \omega, p) \ne \emptyset
  \]
  for all $(t, \omega, p)\in T\times \Omega \times \Delta$.

  \begin{proposition} \label{prop:implication}
  Let $(t,\omega, p) \in T\times \Omega \times \Delta$.
  Under {\rm (${\bf A}_3$)}, $\langle p, x\rangle\ge
  \langle p, a(t,\omega)\rangle$ for every point $x \in
  C^X(t, \omega, p)$.
  \end{proposition}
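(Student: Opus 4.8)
The plan is to argue by contradiction. Suppose $x \in C^X(t,\omega,p)$ but $\langle p, x\rangle < \langle p, a(t,\omega)\rangle$. The point of this assumption is that it places $x$ strictly inside the budget set, so there is room to perturb $x$ upward while staying in $B(t,\omega,p)$, and monotonicity (${\bf A}_3$) will then produce a point in the budget set strictly preferred to $x$, contradicting $x \in C(t,\omega,p)$.

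First I would fix the direction of perturbation. Let $\mathbf 1 = (1,\dots,1) \in \mathbb R^\ell_+$ and set $x_\varepsilon = x + \varepsilon \mathbf 1$ for $\varepsilon > 0$. Since $x \ge 0$ we have $x_\varepsilon \in \mathbb R^\ell_+$, and since $\varepsilon \mathbf 1 > 0$, assumption (${\bf A}_3$) gives $U(t,\omega,x_\varepsilon) > U(t,\omega,x)$. Next I would check that for small enough $\varepsilon$, the point $x_\varepsilon$ lies in $B(t,\omega,p)$: indeed $\langle p, x_\varepsilon\rangle = \langle p, x\rangle + \varepsilon \langle p, \mathbf 1\rangle = \langle p, x\rangle + \varepsilon$ (using $\sum_h p^h = 1$), and since $\langle p, x\rangle < \langle p, a(t,\omega)\rangle$ strictly, choosing $\varepsilon < \langle p, a(t,\omega)\rangle - \langle p, x\rangle$ yields $\langle p, x_\varepsilon\rangle \le \langle p, a(t,\omega)\rangle$, i.e. $x_\varepsilon \in B(t,\omega,p)$.

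Finally I would derive the contradiction: since $x \in C^X(t,\omega,p) \subseteq C(t,\omega,p)$, by definition of $C$ we have $U(t,\omega,x) \ge U(t,\omega,y)$ for all $y \in B(t,\omega,p)$, in particular $U(t,\omega,x) \ge U(t,\omega,x_\varepsilon)$, which contradicts the strict inequality $U(t,\omega,x_\varepsilon) > U(t,\omega,x)$ obtained above. Hence no such $x$ exists, and $\langle p, x\rangle \ge \langle p, a(t,\omega)\rangle$ for every $x \in C^X(t,\omega,p)$.

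This argument is short and the only mild subtlety is bookkeeping with the normalization $\sum_h p^h = 1$, which makes $\langle p, \mathbf 1\rangle = 1$ and keeps the estimate clean; there is no serious obstacle. Note that the conclusion does not actually use the constraint $x \in X(t,\omega,p)$ — it holds for all $x \in C(t,\omega,p)$ — but stating it for $C^X$ is what is needed later.
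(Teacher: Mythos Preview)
Your proof is correct and follows essentially the same approach as the paper's: both argue by contradiction, use the strict budget slack to find a point $x + y$ with $y > 0$ still in $B(t,\omega,p)$, and then invoke monotonicity (${\bf A}_3$) to contradict $x \in C(t,\omega,p)$. The only cosmetic difference is that you make the explicit choice $y = \varepsilon\mathbf 1$ and use the normalization $\sum_h p^h = 1$, whereas the paper simply asserts the existence of such a $y$; your remark that the $X$-constraint plays no role is also accurate.
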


  \begin{proof}
  Assume that $\langle p, x_0\rangle<\langle p, a(t,\omega)
  \rangle$ for some point $x_0 \in C^X(t, \omega, p)$. Then,
  one can choose some $y \in \mathbb R^\ell_+$ such that
  $y>0$ and $\langle p, x_0+y \rangle<\langle p, a(t,\omega)
  \rangle$. Thus, $x_0 + y \in B(t, \omega, p)$. Since $x_0
  \in C^X(t, \omega, p)$, one has $U(t,\omega, x_0) >
  U(t, \omega, x_0+y)$. However, (${\bf A}_3$) implies
  $U(t, \omega, x_0+y) > U(t, \omega,x_0)$. This is a
  contradiction, which completes the proof.
  \end{proof}

  Following Aumann in \cite{Aumann:66}, $C^X(t,\omega, p)$
  is called the \emph{preferred set} of agent $t$ at the
  price $p$ and state of nature $\omega$, and $\int_T C^{X}
  (\cdot,\omega, p)d\mu$ is called the \emph{aggregate preferred
  set} at the price $p$ and state of nature $\omega$. Moreover,
  we shall call
  \[
  \int_T C^X(\cdot, \cdot,\cdot)d\mu:
  \Omega\times \Delta \rightrightarrows \mathbb R^\ell_+
  \]
  the \emph{aggregate preferred correspondence}.
  In the next section, we shall discuss some descriptive
  properties of this object. These properties will
  be used to derive the existence of a maximin rational
  expectations equilibrium of our economic model.

  \section{Properties of the aggregate preferred correspondence}
  \label{sec:aggregate}

  In this section, some results on continuity and measurability
  of the aggregate preferred correspondence are presented.

  \begin{proposition}\label{prop:budcaratheodory}
  Under \emph{({\bf A$_1$})}, for every $(\omega, p)\in \Omega\times
  \Delta$, $B(\cdot, \omega, p): T \rightrightarrows \mathbb
  R^\ell_+$ and  $X(\cdot, \omega, p): T \rightrightarrows
  \mathbb R^\ell_+$ are lower measurable.
  \end{proposition}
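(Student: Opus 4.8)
The plan is to verify directly that for each fixed $(\omega,p)\in\Omega\times\Delta$, the correspondences $B(\cdot,\omega,p)$ and $X(\cdot,\omega,p)$ have the property that the preimage of every open set $V\subseteq\mathbb R^\ell_+$ is a measurable subset of $T$; equivalently, one may use the standard criterion that a closed-valued correspondence is lower measurable iff $t\mapsto d(x,\Phi(t))$ is a measurable function for each fixed $x\in\mathbb R^\ell$, or iff it admits a Castdaing-type representation. Since both correspondences depend on $t$ only through the single real-valued quantity $\langle p,a(t,\omega)\rangle$ (respectively through $\gamma(t,\omega,p)$, which is a continuous function of $\langle\mathbf 1,a(t,\omega)\rangle$ and the constant $\delta(p)$), the key observation is that $t\mapsto\langle p,a(t,\omega)\rangle$ is $\Sigma$-measurable: this follows from (${\bf A}_1$), which gives joint measurability of $a$, hence measurability of $a(\cdot,\omega)$ in $t$, and then measurability of the inner product with the fixed vector $p$.

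First I would fix $(\omega,p)$ and set $r(t):=\langle p,a(t,\omega)\rangle$, noting $r$ is measurable and nonnegative. For the budget correspondence, $B(\cdot,\omega,p)(t)=\{x\in\mathbb R^\ell_+:\langle p,x\rangle\le r(t)\}$. To check lower measurability, take an open set $V\subseteq\mathbb R^\ell_+$ and observe that $B(t,\omega,p)\cap V\ne\emptyset$ iff there exists $x\in V$ with $\langle p,x\rangle\le r(t)$, which holds iff $\inf\{\langle p,x\rangle: x\in V\}< r(t)$ or ($\inf$ is attained and $\le r(t)$); more cleanly, since $0\in B(t,\omega,p)$ always and the set is the intersection of the simplex-halfspace with $\mathbb R^\ell_+$, one shows $\{t: B(t,\omega,p)\cap V\ne\emptyset\}=\{t: r(t)> s_V\}\cup(\text{a }r^{-1}\text{ of a fixed Borel set})$, where $s_V:=\inf_{x\in V,\,x\ge 0}\langle p,x\rangle$ is a constant; in all cases this set is the $r$-preimage of a Borel subset of $\mathbb R$, hence measurable. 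Alternatively, and perhaps more transparently, I would exhibit a countable family of measurable selections whose pointwise closure is $B(\cdot,\omega,p)$: for instance $t\mapsto \lambda_n(t)\,e_h$ for a fixed enumeration of scalings and unit vectors, where $\lambda_n(t)$ is built from $r(t)/p^h$; density of such points in each budget set (using $p\gg 0$) gives the Castaing representation and hence lower measurability.

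For $X(\cdot,\omega,p)$ the argument is the same with $r(t)$ replaced by $b(t,\omega,p)=(\gamma(t,\omega,p),\dots,\gamma(t,\omega,p))$, where $\gamma(t,\omega,p)=\tfrac{1}{\delta(p)}\sum_{h=1}^\ell a^h(t,\omega)$ is again a measurable function of $t$ (a fixed positive scalar times a measurable sum). Since $X(t,\omega,p)=[0,\gamma(t,\omega,p)]^\ell$ is an order interval whose upper corner moves measurably in $t$, lower measurability follows by the same Castaing-selection or distance-function check. The main obstacle—and it is a mild one—is bookkeeping: one must be careful that the relevant infima/suprema used in the measurability criterion are taken over the constrained set $\mathbb R^\ell_+$ (not all of $\mathbb R^\ell$), so that boundary effects of the cone are handled; once one fixes on the distance-function characterization ($t\mapsto d(x,\Phi(t))$ measurable for each $x$) this reduces to showing these distances are continuous, hence measurable, functions of the scalar $r(t)$ or $\gamma(t,\omega,p)$, which is routine. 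I would therefore structure the write-up as: (i) measurability of $r(t)$ and $\gamma(t,\omega,p)$ from (${\bf A}_1$); (ii) for each fixed $x$, express $d(x,B(t,\omega,p))$ and $d(x,X(t,\omega,p))$ as explicit continuous functions of those scalars; (iii) invoke the standard equivalence (distance functions measurable $\iff$ closed-valued correspondence lower measurable) to conclude.
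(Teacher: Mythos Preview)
Your proposal is correct. The paper takes a slightly different but equally elementary route: rather than the distance-function characterization, it works directly with the open-set criterion by defining $h(t,x)=\langle p,x\rangle-\langle p,a(t,\omega)\rangle$, noting that $h(\cdot,x)$ is $\Sigma$-measurable for each fixed $x$ (which is exactly your observation about $r(t)$), and then using a countable dense subset $V\cap\mathbb Q^\ell_+=\{x_k:k\ge 1\}$ of an arbitrary open $V$ to write
\[
B(\cdot,\omega,p)^{-1}(V)=\bigcup_{k\ge 1}\{t\in T:h(t,x_k)\le 0\},
\]
a countable union of measurable sets. This sidesteps the boundary bookkeeping you flagged (no need to decide whether $s_V$ is attained, and no explicit distance formula is required); on the other hand, your distance-function route is more conceptual in that it makes transparent that measurability factors through the single measurable scalar $r(t)$ (respectively $\gamma(t,\omega,p)$), and it extends without change to $X(\cdot,\omega,p)$. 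Both arguments rest on the same input from $({\bf A}_1)$ and either would serve equally well here.
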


  \begin{proof}
  Here, only the proof of lower measurability of
  $B(\cdot, \omega, p)$ is provided. The other case can be done
  analogously. Fix $(\omega, p)\in \Omega\times \Delta$. Define
  a function
  $h: T\times \mathbb R^\ell_+\to \mathbb R$ by letting
  \[
  h(t,x)=\langle p, x\rangle-\langle p, a(t, \omega)\rangle
  \]
  for all $(t, x)\in T\times \mathbb R^\ell_+$. Then, $h(\cdot,
  x)$ is measurable for all $x\in \mathbb R^\ell_+$. Note
  that
  \[
  B(t,\omega, p)= h(t, \cdot)^{-1}((-\infty, 0]).
  \]
  Let $V \subseteq \mathbb R^\ell_+$ be a non-empty open
  subset, and put
  \[
  V\cap \mathbb Q_+^\ell= \{x_k: k \ge 1\}.
  \]
  It is worth to point out that if $x\in B(t, \omega, p)\cap V$,
  then $x_k \in B(t, \omega, p)$ for some $k\ge 1$. Since
  $h(\cdot,x_k)$ is measurable, one has
  \[
  \left\{t\in T:h(t, x_k)\in
  (-\infty, 0]\right\} \in \Sigma
  \]
  for all $k\ge 1$. Thus,
  \begin{eqnarray*}
  B(\cdot, \omega, p)^{-1}(V)&=& \bigcup_{k\ge 1}\left
  \{t\in T: x_k\in B(t, \omega, p)\right\}\\
  &=& \bigcup_{k\ge 1}\left\{t\in T: h(t, x_k)\in
  (-\infty, 0]\right\}
  \end{eqnarray*}
  belongs to $\Sigma$. It follows that $B(\cdot, \omega, p)$
  is lower measurable.
  \end{proof}

  \begin{proposition}\label{prop:Clowermeasure}
  Under \emph{({\bf A$_1$})-({\bf A$_3$})}, $\int_T C^{X}(\cdot,
  \cdot,\cdot)d\mu$ is non-empty compact-valued.
  \end{proposition}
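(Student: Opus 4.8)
The plan is to fix an arbitrary $(\omega,p)\in\Omega\times\Delta$ and to show that the value $\int_T C^X(\cdot,\omega,p)\,d\mu$ is a non-empty compact subset of $\mathbb R^\ell_+$; no joint behaviour in $(\omega,p)$ enters at this point. I would first record the structural properties of the single-agent correspondence $C^X(\cdot,\omega,p):T\rightrightarrows\mathbb R^\ell_+$. For each $t$, the set $C(t,\omega,p)$ is closed by continuity of $U(t,\omega,\cdot)$ in (${\bf A}_2$) while $X(t,\omega,p)$ is compact, so $C^X(t,\omega,p)=C(t,\omega,p)\cap X(t,\omega,p)$ is compact; it is non-empty because, as noted in Section~\ref{sec:model}, $B(t,\omega,p)\cap C^X(t,\omega,p)=B(t,\omega,p)\cap C(t,\omega,p)\neq\emptyset$. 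Moreover $C^X(t,\omega,p)\subseteq X(t,\omega,p)\subseteq\{x:0\le x\le b(t,\omega,p)\}$, and since $a(\cdot,\omega)$ is integrable (implicit in (${\bf A}_1$)) and $\delta(p)>0$, the map $b(\cdot,\omega,p)=(\gamma(\cdot,\omega,p),\dots,\gamma(\cdot,\omega,p))$ lies in $L_1(\mu,\mathbb R^\ell_+)$, so $C^X(\cdot,\omega,p)$ is integrably bounded.

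The substantive step is lower measurability of $C^X(\cdot,\omega,p)$. Introduce the value function $m(t):=\max\{U(t,\omega,x):x\in B(t,\omega,p)\}$, well defined since $B(t,\omega,p)$ is non-empty compact and $U(t,\omega,\cdot)$ continuous. As $U(\cdot,\omega,\cdot)$ is a Carath\'eodory function by (${\bf A}_2$) and $B(\cdot,\omega,p)$ is lower measurable with compact values by Proposition~\ref{prop:budcaratheodory}, the measurable maximum theorem applies: $m$ is measurable, and the argmax correspondence $t\mapsto\{x\in B(t,\omega,p):U(t,\omega,x)=m(t)\}=B(t,\omega,p)\cap C^X(t,\omega,p)$ is lower measurable with non-empty compact values and admits a measurable selection. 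Hence $\psi(t,x):=U(t,\omega,x)-m(t)$ is again Carath\'eodory, so $\Sigma\otimes\mathcal B(\mathbb R^\ell)$-measurable, and therefore
\[
\mathrm{Gr}\bigl(C^X(\cdot,\omega,p)\bigr)=\bigl\{(t,x)\in T\times\mathbb R^\ell_+:x\le b(t,\omega,p),\ \psi(t,x)\ge 0\bigr\}
\]
is $\Sigma\otimes\mathcal B(\mathbb R^\ell)$-measurable. Being a closed-valued correspondence with measurable graph and values in the Polish space $\mathbb R^\ell$, $C^X(\cdot,\omega,p)$ is then lower measurable (with a countable Castaing representation) by the von Neumann--Aumann projection and selection theorems. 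The reliance on completeness of $(T,\Sigma,\mu)$ in this last step can be reduced: by (${\bf A}_3$) one has $C(t,\omega,p)=\overline{\{x\in\mathbb R^\ell_+:\psi(t,x)>0\}}$ --- any $x$ with $\psi(t,x)=0$ is the limit of points $x+y_n$ with $y_n>0$, $y_n\to 0$, which satisfy $\psi(t,x+y_n)>0$ --- and the strict superlevel correspondence is lower measurable because $\{t:\{x:\psi(t,x)>0\}\cap W\neq\emptyset\}=\{t:\sup_{x\in W\cap\mathbb Q^\ell}\psi(t,x)>0\}\in\Sigma$ for every open $W$; closures preserve lower measurability, and one then intersects with the lower measurable, compact-valued $X(\cdot,\omega,p)$.

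To conclude, lower measurability together with non-empty values yields a measurable selection $f$ of $C^X(\cdot,\omega,p)$ --- for instance a measurable selection of the argmax correspondence above, which needs no completeness --- and integrable boundedness by $b(\cdot,\omega,p)\in L_1$ makes $f$ integrable, so $\int_T f\,d\mu\in\int_T C^X(\cdot,\omega,p)\,d\mu$ and the value is non-empty. Compactness is then the classical statement that the integral of a lower measurable, closed-valued, integrably bounded correspondence from $T$ into $\mathbb R^\ell$ is a norm-compact subset of $\mathbb R^\ell$ (see, e.g., \cite{Hildenbrand:74}).

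I expect the main obstacle to be exactly the lower measurability of $C^X(\cdot,\omega,p)$: the measurability of the argmax value function $m$ via the Carath\'eodory/measurable-maximum argument, and the passage from the measurable graph --- or, through the monotonicity trick, from the closed superlevel set intersected with $X(\cdot,\omega,p)$ --- back to genuine lower measurability, where care is needed regarding completeness of the agent measure space. Note that concavity (${\bf A}_4$) plays no role here; only (${\bf A}_1$)--(${\bf A}_3$) are used, matching the hypotheses of the statement.
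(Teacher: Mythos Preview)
Your argument is correct and reaches the same conclusion, but the route to lower measurability of $C^X(\cdot,\omega,p)$ differs from the paper's. The paper does not form the value function $m$ at all. Instead it takes a Castaing representation $\{f_n\}$ of the lower measurable compact-valued correspondence $B(\cdot,\omega,p)$, so that $B(t,\omega,p)=\overline{\{f_n(t):n\ge 1\}}$, and observes that
\[
C(t,\omega,p)=\bigcap_{n\ge 1}\bigl\{x\in\mathbb R^\ell_+:U(t,\omega,x)\ge U(t,\omega,f_n(t))\bigr\}.
\]
Each factor $C_n$ is the superlevel set of a Carath\'eodory function and hence lower measurable by the same elementary argument as in Proposition~\ref{prop:budcaratheodory}; intersecting the countable family with the compact-valued lower measurable $X(\cdot,\omega,p)$ preserves lower measurability. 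This avoids both the measurable maximum theorem and any appeal to graph measurability or projection, so completeness of $(T,\Sigma,\mu)$ never enters. Your approach, by contrast, packages the same content into the measurable maximum theorem to obtain $m$ directly, and then either reads off lower measurability from the measurable graph (which would need completeness) or, more carefully, uses (${\bf A}_3$) to write $C(t,\omega,p)$ as the closure of the strict superlevel set of $\psi$, whose lower measurability follows from a countable-rational-point argument. Both routes are sound; the paper's is slightly more self-contained, while yours makes the role of the optimal value explicit and immediately exhibits a selection of the argmax inside $B\cap C^X$.
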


  \begin{proof}
  Fix $(\omega, p)\in \Omega\times \Delta$. By ({\bf A$_2$}),
  $C^{X}(t,\omega, p)$ is non-empty closed for all $t\in T$. By
  the lower measurability of $B(\cdot, \omega, p)$, there
  exists a sequence $\{f_n: n \ge 1\}$ of measurable
  functions from $T$ to ${\mathbb R}^\ell_+$ such that
  \[
  B(t, \omega,p)= \overline {\{f_n(t):
  n \ge 1\}}
  \]
  for all $t\in T$. For each $n \ge 1$, define a
  correspondence $C_n: T \rightrightarrows \mathbb R^\ell_+$ by
  letting
  \[
  C_n(t)= \left\{x\in \mathbb R^\ell_+: U(t, \omega, x)
  \ge U(t, \omega, f_n(t))\right\}
  \]
  for all $t\in T$. Obviously, one has
  \[
  C(t,\omega, p)\subseteq \bigcap_{n\ge 1}C_n(t)
  \]
  for all $t \in T$. If $x\in {\mathbb R}^\ell_+ \setminus
  C(t,\omega, p)$ for some $t\in T$, there exists a point
  $y\in B(t,\omega,p)$ such that $U(t, \omega, y)>
  U(t, \omega, x)$. By ({\bf A$_2$}), there
  exists an $n_0 \ge 1$ such that $U(t, \omega,f_{n_0}(t))
  > U(t, \omega, x)$. This implies $x\notin C_{n_0} (t)$.
  Thus, it is verified that
  \[
  C(t,\omega, p)= \bigcap_{n\ge 1}C_n(t)
  \]
  for all $t\in T$. Fix $n\ge 1$, and define a function
  $h:T\times \mathbb R^\ell_+\to \mathbb R$ by
  \[
  h(t, x)=U(t, \omega, f_n(t))- U(t, \omega, x).
  \]
  Clearly, $h$ is Carath\'{e}odory. Similar to  Proposition
  \ref{prop:budcaratheodory}, one can show that $C_n$ is lower
  measurable. Since $X(\cdot,\omega, p)$ is
  compact-valued and
  \[
  C^X(\cdot,\omega,p)=
  \bigcap_{n\ge 1}C_n(\cdot) \cap X(\cdot, \omega, p),
  \]
  then $C^{X}(\cdot, \omega, p)$ is lower measurable.
  By the Kuratowski-Ryll-Nardzewski measurable selection theorem
  in \cite{Kuratowski:65},
  $C^X (\cdot, \omega, p)$ has a measurable selection
  which is also integrable, as $b(\cdot, \omega, p)$
  is so. Since $C^X(\cdot, \omega, p)$ is closed-valued and
  integrably bounded, $\int_T C^{X}(\cdot, \omega, p)d\mu$ is
  compact-valued.
  \end{proof}

  Next, we establish Hausdorff continuity of the aggregate
  preferred correspondence with respect to the variable $p
  \in \Delta$.

  \begin{theorem}\label{thm:contintC}
  Assume \emph{({\bf A$_1$})-({\bf A$_3$})}. For each $\omega
  \in \Omega$, $\int_T C^X(\cdot, \omega, \cdot)d\mu: \Delta
  \rightrightarrows {\mathbb R}^\ell_+$ is Hausdorff continuous
  \end{theorem}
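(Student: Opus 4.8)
The plan is to prove Hausdorff continuity of $p \mapsto \int_T C^X(\cdot, \omega, \cdot)\,d\mu$ by establishing it at the level of the fiber correspondences and then lifting to the integral via a standard result on the Aumann integral of correspondences (e.g. Theorem 4 in \cite[p.~73]{Hildenbrand:74}, or the continuity results for integrals of Hausdorff continuous, integrably bounded correspondences). Fix $\omega \in \Omega$. I would first recall that $X(t,\omega,\cdot)$ is Hausdorff continuous for each $t$ (stated in the excerpt), and that $C^X(t,\omega,p) = C(t,\omega,p) \cap X(t,\omega,p)$ is contained in the integrably bounded set $X(t,\omega,p) \subseteq \{x : x \le b(t,\omega,p)\}$, where $b(\cdot,\omega,p)$ depends continuously on $p$ through $\delta(p)$ and is uniformly (locally in $p$) integrably bounded on $\Delta$. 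So the main work is to show that for each fixed $t$, the correspondence $C^X(t,\omega,\cdot): \Delta \rightrightarrows \mathbb{R}^\ell_+$ is Hausdorff continuous, i.e. both upper and lower hemicontinuous (which, given the uniform local compactness from $b$, is equivalent to Hausdorff continuity).

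For \emph{upper hemicontinuity} of $C^X(t,\omega,\cdot)$ at a point $p_0$: take $p_n \to p_0$ and $x_n \in C^X(t,\omega,p_n)$ with $x_n \to x$. Since $x_n \le b(t,\omega,p_n) \to b(t,\omega,p_0)$ we get $x \le b(t,\omega,p_0)$, so $x \in X(t,\omega,p_0)$. To see $x \in C(t,\omega,p_0)$, take any $y \in B(t,\omega,p_0)$; I would use Proposition~\ref{prop:implication} together with monotonicity (${\bf A}_3$) to reduce to the case $\langle p_0, y\rangle < \langle p_0, a(t,\omega)\rangle$, a strict inequality that persists for $p_n$ close to $p_0$, hence $y \in B(t,\omega,p_n)$ for large $n$, so $U(t,\omega,x_n) \ge U(t,\omega,y)$; passing to the limit via continuity of $U(t,\omega,\cdot)$ (${\bf A}_2$) gives $U(t,\omega,x) \ge U(t,\omega,y)$. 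The boundary case $\langle p_0,y\rangle = \langle p_0,a(t,\omega)\rangle$ is handled by approximating $y$ from inside the budget line (scaling toward $0$) and using continuity.

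For \emph{lower hemicontinuity} of $C^X(t,\omega,\cdot)$ at $p_0$: take $p_n \to p_0$ and $x_0 \in C^X(t,\omega,p_0)$; I need $x_n \in C^X(t,\omega,p_n)$ with $x_n \to x_0$. The natural candidate is to solve the maximization problem $\max\{U(t,\omega,x) : x \in B(t,\omega,p_n)\}$, whose solution set is nonempty (compactness of $B$ plus continuity of $U$); pick $x_n$ in it, so $x_n \in B(t,\omega,p_n) \cap C^X(t,\omega,p_n)$. By the Berge maximum theorem applied to the Hausdorff continuous budget correspondence $B(t,\omega,\cdot)$ (which is continuous since $a(t,\omega) \gg$-type interiority is not needed here — $B$ is already convex-valued with nonempty interior whenever $a(t,\omega) \ne 0$, and lower hemicontinuity of $B$ follows by the usual scaling argument, upper hemicontinuity being routine), the value function is continuous and the argmax correspondence is upper hemicontinuous, so after passing to a subsequence $x_n \to \bar{x}$ with $U(t,\omega,\bar{x})$ equal to the maximum value at $p_0$, i.e. $\bar{x} \in B(t,\omega,p_0) \cap C^X(t,\omega,p_0)$. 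But I want convergence to the \emph{given} $x_0$, not merely to some maximizer. Here I would use the concavity assumption (${\bf A}_4$) — wait, the theorem is stated only under (${\bf A}_1$)–(${\bf A}_3$); so instead I use that $C^X(t,\omega,p_0)$ and $C^X(t,\omega,p_n)$ all lie inside $X(t,\omega,\cdot)$ which is Hausdorff continuous, and combine this with the fact that $C(t,\omega,\cdot)$ is \emph{decreasing-ish} in the sense that a cheaper price relaxes nothing about the preferred set — actually the cleanest route is: lower hemicontinuity of the intersection $C(t,\omega,\cdot) \cap X(t,\omega,\cdot)$, where $X$ is Hausdorff continuous with convex values and $C$ has closed values, can fail in general, so one must exploit structure. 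I would therefore take the selection $x_n$ to lie on the segment from $x_0$ toward the $p_n$-maximizer and argue via continuity of $U$ and of $B$ that the upper-contour property is preserved in the limit; the key point is that any limit point of such $x_n$ must again satisfy $U(t,\omega,\cdot) \ge$ the max value at $p_0$, forcing it into $C^X(t,\omega,p_0)$, and by choosing the segment parameter to vanish we steer the limit to $x_0$ itself. \textbf{The main obstacle} is precisely this lower hemicontinuity of the fiberwise intersection $C(t,\omega,\cdot) \cap X(t,\omega,\cdot)$ at the right limit point; once the fiberwise Hausdorff continuity and the uniform-in-$p$ (locally) integrable bound are in hand, Hausdorff continuity of the Aumann integral follows from the cited integration theorem, using dominated convergence to pass the Hausdorff distance estimate $d_H\big(C^X(t,\omega,p_n), C^X(t,\omega,p_0)\big) \to 0$ (bounded by the integrable function $2\gamma(t,\omega,\cdot)\sqrt{\ell}$ locally) under the integral sign.
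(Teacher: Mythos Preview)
Your overall strategy---prove fiberwise Hausdorff continuity of $p\mapsto C^X(t,\omega,p)$, establish a common integrable bound locally in $p$, and then lift to the Aumann integral---is exactly the paper's approach, and your upper hemicontinuity argument is essentially the same as theirs. The difficulty you yourself flag in the lower hemicontinuity step is, however, a genuine gap.

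Your proposed fix, taking $x_n$ on the segment between $x_0$ and a $p_n$-maximizer $z_n$ with the segment parameter tending to $0$, does not work under $({\bf A}_1)$--$({\bf A}_3)$ alone: without concavity there is no reason for such $x_n$ to lie in $C(t,\omega,p_n)$, since $U(t,\omega,x_n)$ may be strictly below the maximum value $U(t,\omega,z_n)$. And Berge only gives you convergence of maximizers, not of arbitrary points of the preferred set, as you note. So as written the argument stalls precisely where you say it does.

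The missing idea, which the paper exploits, is to use \emph{monotonicity} $({\bf A}_3)$ rather than concavity or Berge. Given $d\in C^X(t,\omega,p)$ with $d<b(t,\omega,p)$, perturb $d$ upward in one coordinate to $d^i=d+(0,\dots,\delta_i,\dots,0)\le b(t,\omega,p)$ with $\delta_i\downarrow 0$; by $({\bf A}_3)$ one has the \emph{strict} inequality $U(t,\omega,d^i)>U(t,\omega,d)$. Now pick $d_n^i\in X(t,\omega,p_n)$ with $d_n^i\to d^i$ (Hausdorff continuity of $X$). If $d_n^i\notin C^X(t,\omega,p_n)$ along a subsequence, take $b_k\in B(t,\omega,p_{n_k})$ with $U(t,\omega,b_k)>U(t,\omega,d_{n_k}^i)$; a subsequential limit $b\in B(t,\omega,p)$ then satisfies $U(t,\omega,b)\ge U(t,\omega,d^i)>U(t,\omega,d)$, contradicting $d\in C^X(t,\omega,p)$. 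Thus $d_n^i\in C^X(t,\omega,p_n)$ eventually, so ${\rm dist}(d^i,C^X(t,\omega,p_n))\to 0$, and letting $i\to\infty$ gives $d\in{\rm Li}\,C^X(t,\omega,p_n)$. The strict gain from monotonicity is exactly what converts the weak limit inequality into a contradiction; your segment argument has no analogous strictness to exploit.
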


  \begin{proof}
  Fix $\omega\in \Omega$. Let $\{p_n: n\ge 1\}\subseteq \Delta$
  converge to $p\in \Delta$.
  Choose $\varepsilon> 0$ and $N\ge 1$ such that $\varepsilon<
  \delta(p)$ and $\varepsilon<\delta(p_n)$ for all $n\ge N$. Let
  \[
  c= \min\left\{\delta(p_n), \varepsilon: n= 1, 2, \cdots,
  N- 1\right\},
  \]
  \[
  d(t,\omega)= \frac{1}{c}\sum_{h= 1}^\ell a^h (t,\omega),
  \]
  and
  \[
  \xi(t, \omega)= (d(t,\omega), ..., d(t,\omega)).
  \]
  Define $M(\omega)$ by
  \[
  M(\omega) =\left\{ x \in \mathbb R^\ell_+: x\le \int_T
  \xi (\cdot, \omega) d\mu\right\}.
  \]
  Since $X(\cdot,\omega, p_n)$ and $X(\cdot,\omega, p)$ are
  upper bounded by $\xi(\cdot,\omega)$, then $\int_T C^{X}(\cdot,
  \omega, p_n)d\mu$ and $\int_T C^{X}(\cdot, \omega, p)d\mu$
  are contained in the compact subset $M(\omega)$ of $\mathbb
  R^\ell_+$ . Thus, one only needs to show that $\left\{
  \int_T C^{X}(\cdot, \omega, p_n)d\mu: n\ge 1\right\}$ converges
  to $\int_T C^{X}(\cdot, \omega, p)d\mu$ in the Hausdorff metric
  topology on ${\mathscr K}_0(M(\omega))$, which is equivalent to
  \[
  {\rm Li} \int_T C^{X}(\cdot, \omega, p_n)d\mu =
  {\rm Ls} \int_T C^{X}(\cdot, \omega, p_n)d\mu =
  \int_T C^{X}(\cdot, \omega, p)d\mu.
  \]
  The verification of the above equation can be split into two
  steps. First, one verifies
  \[
  {\rm Ls}\int_T C^{X}(\cdot,
  \omega, p_n)d\mu \subseteq \int_T C^{X}(\cdot, \omega,
  p)d\mu.
  \]
  To do this, it is enough to verify that for any $t\in T$,
  \[
  {\rm Ls} C^{X}(t, \omega, p_n)\subseteq C^{X}(t, \omega, p).
  \]
  Pick $t\in T$ and $x\in {\rm Ls} C^{X}(t,\omega, p_n)$.
  Then, there exist positive integers $n_1<n_2<n_3 <\cdots$ and
  for each $k$ a point $x_k \in C^{X}(t, \omega, p_{n_k})$ such
  that $\{x_k: k\ge 1\}$ converges to $x$. It is obvious that
  $x\in X(t, \omega, p)$. If $x\notin C^{X}(t,\omega, p)$, by
  the continuity of $U(t,\omega,\cdot)$, one can choose some $y
  \in \mathbb R^\ell_+$ such that $\langle p, y\rangle<\langle
  p, a(t,\omega)\rangle$ and $U(t,\omega,y)> U(t,\omega,x)$.
  By the Hausdorff continuity of $X(t,\omega, \cdot)$, $\{X(t,
  \omega, p_{n_k}): k\ge 1\}$ converges to $X(t,\omega, p)$
  in the Hausdorff metric topology. Since $y \in X(t,\omega,
  p)$, there exists a sequence $\{y_k: k\ge 1\}$ such that
  $y_k\in X(t,\omega, p_{n_k})$ for all $k\ge 1$ and $\{y_k:
  k\ge 1\}$ converges to $y$. It follows that $U(t,\omega,
  y_k)> U(t,\omega, x_k)$ and $\langle p_{n_k}, y_k\rangle <
  \langle p_{n_k}, a(t, \omega)\rangle$ for all sufficiently
  large $k$, which is a contradiction with $x_k \in C^X
  \left(t,\omega, p_{n_k}\right)$ for all $k \ge 1$. Therefore,
  one must have $x\in C^{X}(t,\omega, p)$. Secondly, one needs
  to verify
  \[
  \int_T C^{X}(\cdot, \omega, p)d\mu \subseteq {\rm Li}
  \int_T C^{X}(\cdot, \omega, p_n)d\mu.
  \]
  It is enough to verify that for all $t\in T$,
  \[
  C^{X}(t, \omega, p) \subseteq {\rm Li} C^{X}(t, \omega, p_n).
  \]
  Fix $t\in T$ and pick $d\in C^{X}(t,\omega, p)$. If $d= b(t,
  \omega, p)$, then $b(t,\omega, p_n)\in C^X (t, \omega, p_n)$
  and the sequence $\{b(t,\omega,p_n): n\ge 1\}$ converges to
  $d$. Assume $d< b(t, \omega,p)$. Select $\delta> 0$ such that
  \[
  d+ (0,\cdots, \delta,\cdots, 0)\le b(t, \omega, p).
  \]
  Further, choose a sequence $\{\delta_i: i\ge 1\}$ in $(0,
  \delta]$ converging to $0$. For each $i\ge 1$, let
  \[
  d^i= d+ (0,\cdots, \delta_i,\cdots, 0),
  \]
  and choose a sequence $\{d_n^i: n\ge 1\}$
  such that for each $n$, $d_n^i \in X(t,\omega, p_n)$ and
  $\{d_n^i: n \ge 1\}$ converges to $d^i$. It is
  claimed that for each $i\ge 1$, $d_n^i\in C^{X}(t, \omega,
  p_n)$ for sufficiently large $n$. Otherwise, there must exist
  an $i_0$ and a subsequence $\{d_{n_k}^{i_0}: k\ge 1\}$
  of $\{d_{n}^{i_0}: n\ge 1\}$ such that $d_{n_k}^{i_0} \notin
  C^{X}(t, \omega, p_{n_k})$. Let $b_k\in B(t, \omega, p_{n_k})$
  and $U(t,\omega, b_k)> U(t,\omega, d_{n_k}^{i_0})$
  for all $k\ge 1$. Then $\{b_k: k\ge 1\}$ has a subsequence
  converging to some $b\in B(t, \omega, p)$. Applying ({\bf
  A$_2$}) and ({\bf A$_3$}), one can obtain
  \[
  U(t,\omega, b)\ge U(t,\omega,d^{i_0})> U(t,\omega, d),
  \]
  which contradicts with the fact $d\in C^{X}(t, \omega, p)$.
  To complete the proof, note that the previous claim implies that
  for each $i$, $\{{\rm dist}(d^i, C^X(t,\omega, p_n)): n\ge1\}$
  converges to $0$. Since $\{d^i: i\ge 1\}$ converges to $d$,
  one concludes that $\{{\rm dist}(d, C^X(t,\omega, p_n)): n
  \ge1\}$ converges to $0$. This means that $d \in {\rm Li}
  C^{X}(t, \omega, p_n)$.
  \end{proof}

  The next result is crucial for the existence theorem
  in Section \ref{sec:existence}. In its proof, the following
  characterization of lower measurability of correspondences
  in \cite{aubin:08} is used: A correspondence
  $F: (\Omega, {\mathscr F},\nu) \rightrightarrows \mathbb
  R^\ell_+$ is lower measurable if and only if for all
  $y\in \mathbb R^\ell_+$, ${\rm dist}
  (y, F(\cdot)): (\Omega, {\mathscr F},\nu) \to \mathbb R_+$ is
  a measurable function.

  \begin{theorem}\label{thm:intmeasure}
  Assume \emph{({\bf A$_1$})-({\bf A$_3$})}. For each $p\in
  \Delta$, $\int_T C^{X}(\cdot, \cdot,p)d\mu: \Omega
  \rightrightarrows \mathbb R^\ell_+ $ is lower measurable.
  \end{theorem}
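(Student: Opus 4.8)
The plan is to rely on the characterization recalled above: fixing $p\in\Delta$, it suffices to show that for every $y\in\mathbb R^\ell_+$ the function $\omega\mapsto{\rm dist}\big(y,\int_T C^X(\cdot,\omega,p)d\mu\big)$ is $\mathscr F$-measurable (its values are non-empty and compact by Proposition \ref{prop:Clowermeasure}). I would obtain this by first lifting the measurability of $C^X$ from each individual state to a joint measurability on $T\times\Omega$, and then reading the distance off a support-function formula.

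First I would show that $C^X(\cdot,\cdot,p): T\times\Omega\rightrightarrows\mathbb R^\ell_+$ is lower measurable with respect to $\Sigma\otimes\mathscr F$. By $({\bf A}_1)$ the function $a$ is jointly measurable, hence so are $\gamma(\cdot,\cdot,p)$ and $b(\cdot,\cdot,p)$; consequently $X(\cdot,\cdot,p)$ is jointly lower measurable (a Castaing representation is furnished by the coordinatewise rational multiples of $b(\cdot,\cdot,p)$), and for each $\omega$ the correspondence $C^X(\cdot,\omega,p)\subseteq X(\cdot,\omega,p)$ is integrably bounded by $b(\cdot,\omega,p)$. Running the argument of Proposition \ref{prop:budcaratheodory} with $h(t,\omega,x)=\langle p,x\rangle-\langle p,a(t,\omega)\rangle$, which is now jointly measurable in $(t,\omega)$ for each $x$, shows that $B(\cdot,\cdot,p)$ is jointly lower measurable; fix a jointly ($\Sigma\otimes\mathscr F$-)measurable Castaing representation $\{f_n(\cdot,\cdot)\}_{n\ge1}$ of it. Now proceed as in the proof of Proposition \ref{prop:Clowermeasure}, but over $T\times\Omega$: the map $(t,\omega,x)\mapsto U(t,\omega,x)-U(t,\omega,f_n(t,\omega))$ is Carath\'eodory, and each $C_n(t,\omega)=\{x\in\mathbb R^\ell_+:U(t,\omega,x)\ge U(t,\omega,f_n(t,\omega))\}$ is jointly lower measurable. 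Here $({\bf A}_2)$ and $({\bf A}_3)$ are used to guarantee that whenever $C_n(t,\omega)$ meets an open set $V$ it already meets $V\cap\mathbb Q^\ell_+$ — push a witness point slightly up along $(1,\dots,1)$, which by monotonicity strictly increases $U$, then perturb to a rational point of $V$ — so that the computation of Proposition \ref{prop:budcaratheodory} applies verbatim. Since $\sup_n U(t,\omega,f_n(t,\omega))=\max_{z\in B(t,\omega,p)}U(t,\omega,z)$ by continuity, one has $C(\cdot,\cdot,p)=\bigcap_{n\ge1}C_n$, hence $C^X(\cdot,\cdot,p)=\big(\bigcap_{n\ge1}C_n\big)\cap X(\cdot,\cdot,p)$, a countable intersection of closed-valued lower measurable correspondences with a compact-valued lower measurable one; as in Proposition \ref{prop:Clowermeasure} this is jointly lower measurable (working over $(T\times\Omega,\Sigma\otimes\mathscr F)$, completed with respect to $\mu\otimes\nu$ if necessary). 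Fix a jointly measurable Castaing representation $\{g_n(\cdot,\cdot)\}_{n\ge1}$ of $C^X(\cdot,\cdot,p)$.

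To conclude, for $q\in\mathbb R^\ell$ set $s(t,\omega,q)=\sup_{z\in C^X(t,\omega,p)}\langle q,z\rangle=\sup_n\langle q,g_n(t,\omega)\rangle$; this is jointly measurable in $(t,\omega)$ and, for each $\omega$, dominated in $t$ by the integrable function $|q|\,|b(\cdot,\omega,p)|$, so $\omega\mapsto\int_T s(t,\omega,q)d\mu(t)$ is $\mathscr F$-measurable for every $q$. Write $K(\omega)=\int_T C^X(\cdot,\omega,p)d\mu$, compact by Proposition \ref{prop:Clowermeasure}; its support function at $q$ equals $\int_T s(t,\omega,q)d\mu(t)$, and when $K(\omega)$ is convex (as is the case, e.g., when $\mu$ is non-atomic) this gives
\[
{\rm dist}\big(y,K(\omega)\big)=\sup_{\|q\|\le1}\Big(\langle q,y\rangle-\int_T s(t,\omega,q)d\mu(t)\Big)=\sup_{q\in D}\Big(\langle q,y\rangle-\int_T s(t,\omega,q)d\mu(t)\Big),
\]
where $D$ is a fixed countable dense subset of the closed unit ball (the inner expression being continuous in $q$). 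Thus $\omega\mapsto{\rm dist}(y,K(\omega))$ is a countable supremum of $\mathscr F$-measurable functions, hence $\mathscr F$-measurable, and the theorem follows.

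The promotion to joint lower measurability is routine once Carath\'eodory functions and Castaing representations are handled with care, the only delicate point being which $\sigma$-algebra is complete when the countable-intersection fact is invoked; since $\mathscr F$ is complete and $\mu$ is finite, completing $\Sigma\otimes\mathscr F$ with respect to $\mu\otimes\nu$ is harmless for the final $\mathscr F$-measurability. The genuine obstacle is the support-function step: the identity exchanging $\int_T$ and $s$ is the standard one for integrably bounded lower measurable correspondences, but recovering ${\rm dist}(y,K(\omega))$ from the support function requires $K(\omega)$ to be convex. If one does not assume convexity of the Aumann integral, this last step should instead be carried out by a direct appeal to the measurability of a parametrized Aumann integral, obtained from the jointly measurable Castaing representation $\{g_n\}$ together with the projection theorem applied to the graph $\{(\omega,z):z\in K(\omega)\}$, at the price of heavier measure-theoretic bookkeeping.
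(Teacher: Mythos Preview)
Your route is genuinely different from the paper's. The paper never lifts $C^X$ to joint $(\Sigma\otimes\mathscr F)$-measurability; instead it approximates $a$ and $U$ uniformly by simple functions $a_n,\psi_n$, builds the corresponding ``simple'' preferred sets $C_n^X(t,\omega)$, shows directly that $\omega\mapsto\mathscr S_{C_n^X(\cdot,\omega)}$ is lower measurable into $L_1$ (because on each rectangle $T_i^n\times\Omega_i^n$ everything is constant), pushes this through the continuous map $f\mapsto\int_T f\,d\mu$ to get lower measurability of $\omega\mapsto\int_T C_n^X(\cdot,\omega)\,d\mu$, and finally proves $\int_T C_n^X(\cdot,\omega)\,d\mu\to\int_T C^X(\cdot,\omega,p)\,d\mu$ in the Hausdorff metric for every $\omega$, so the limit inherits measurability. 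No convexity of the Aumann integral is used anywhere.

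Your argument, by contrast, is clean and conceptually direct up to the last step: the promotion of Propositions~\ref{prop:budcaratheodory} and~\ref{prop:Clowermeasure} to $T\times\Omega$ is correct (and your explicit use of $({\bf A}_3)$ to push a boundary witness of $C_n$ strictly inside before perturbing to a rational point is exactly the care that step needs), and the identity $\sigma_{\int_T C^X}(q)=\int_T s(\cdot,\omega,q)\,d\mu$ is standard. The trouble is precisely where you flag it: the formula ${\rm dist}(y,K(\omega))=\sup_{\|q\|\le1}\big(\langle q,y\rangle-\sigma_{K(\omega)}(q)\big)$ recovers the distance to the closed convex hull of $K(\omega)$, not to $K(\omega)$ itself. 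The paper's standing hypothesis on $(T,\Sigma,\mu)$ is merely that $\mu$ is finite, with no atomlessness assumption, so $K(\omega)=\int_T C^X(\cdot,\omega,p)\,d\mu$ need not be convex and your displayed identity can fail. Your proposed fallback---``projection theorem applied to the graph $\{(\omega,z):z\in K(\omega)\}$''---is not a proof: membership $z\in K(\omega)$ is an existential over all integrable selections, and a Castaing representation of $C^X(\cdot,\cdot,p)$ does not by itself exhibit that graph as a projection of a set in $\mathscr F\otimes\mathfrak B(\mathbb R^\ell)\otimes(\text{something separable})$.

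In short: your proof is complete and arguably more transparent than the paper's when $\mu$ is non-atomic, but as written it does not establish the theorem in the generality stated. The paper's simple-function-approximation approach is heavier, but it sidesteps convexity entirely and therefore covers the atomic case as well.
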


  \begin{proof}
  Fix $p\in \Delta$. Since $a$ and $U$ are $\Sigma\otimes \mathscr F$-measurable and $\Sigma\otimes \mathscr F\otimes \mathfrak
  B(\mathbb R^\ell_+)$-measurable respectively, by the argument
  of a result in \cite[131]{Yosida:80}, there
  exist two sequences $\{a_n: n\ge 1\}$ and $\{\psi_n: n\ge 1
  \}$ of simple $\Sigma\otimes \mathscr F$-measurable and
  simple $\Sigma\otimes \mathscr F\otimes \mathfrak B(\mathbb R^\ell_+)$-measurable functions respectively such that
  $\{a_n: n\ge 1\}$ uniformly converges to $a$
  on $T\times \Omega$ and $\{\psi_n: n \ge 1\}$ uniformly
  converges to $U$ on $T\times \Omega
  \times \mathbb R^\ell_+$. For each $n\ge 1$, write $a_n$
  and $\psi_n$ as
  \[
  a_n= \sum_{i\ge 1} e_i \chi_{T_i^n\times \Omega_i^n}
  \]
  and
  \[
  \psi_n= \sum_{i\ge 1} v_i \chi_{T_i^n\times
  \Omega_i^n\times B_i^n},
  \]
  where $e_i\in \mathbb R^\ell_+$, $v_i\in \mathbb R$, and
  $\{T_i^n\times \Omega_i^n\times B_i^n: i\ge 1\}$ is a
  partition of $T\times \Omega\times \mathbb R^\ell_+$
  for all $n\ge 1$.
  Choose $N\ge 1$ such that $\|a_n- a\|_\infty < 1$ for all
  $n \ge N$. By the measurability of $a_n(\cdot, \omega)$,
  $a_n(\cdot, \omega)\in L_1 \left(\mu,\mathbb R^\ell_+
  \right)$ for all $\omega\in \Omega$ and all
  $n\ge 1$ (replacing $a_n$ for all $1\le n< N$ by some
  constant functions, if necessary). Let
  \[
  \gamma_n(t,\omega)= \frac{1}{\delta(p)}\sum_{h=1}^\ell
  a_n^h(t,\omega)
  \]
  and
  \[
  b_n(t, \omega)= (\gamma_n(t, \omega),\cdots, \gamma_n(t,
  \omega)).
  \]
  Define $X_n, B_n, C_n: T\times \Omega
  \rightrightarrows \mathbb R^\ell_+$ such that
  \[
  X_n(t, \omega)= \left\{x\in \mathbb R^\ell_+: x\le
  b_n(t, \omega)\right\},
  \]
  \[
  B_n(t, \omega)= \left\{x\in \mathbb R^\ell_+: \langle p,
  x\rangle\le \langle p, a_n(t, \omega)\rangle\right\}
  \]
  and
  \[
  C_n(t, \omega)= \left\{y\in \mathbb R^\ell_+:
  \psi_n(t,\omega, y)\ge \psi_n(t, \omega, x) \mbox{ for all }
  x\in B_n(t, \omega)\right\}.
  \]
  In addition, define $C_n^X: T\times \Omega \rightrightarrows
  \mathbb R^\ell_+$ such that for all $(t, \omega) \in T \times
  \Omega$,
  \[
  C_n^{X}(t, \omega)= \left(C_n(t, \omega) \cup \{b_n(t,\omega)\}
  \right) \cap X_n(t, \omega).
  \]
  For every $n\ge 1$, define the correspondence $H_n: (\Omega,
  \mathscr F, \nu) \rightrightarrows L_1\left(\mu, \mathbb
  R^\ell_+ \right)$ by letting $H_n(\omega)={\mathscr S}_{C_n^{X}
  (\cdot, \omega)}$. Obviously, $H_n(\omega)\ne \emptyset$ for
  all $\omega\in \Omega$.

  \medskip
  {\it Claim 1. For each $n\ge 1$, $H_n$ is lower measurable.}
  For convenience, let $\Theta: L_1\left(\mu, \mathbb R^\ell_+
  \right) \times \Omega \to \mathbb R_+$ be the
  function such that
  \[
  \Theta(g, \omega)= {\rm dist}(g, H_n (\omega))
  \]
  for all $g \in L_1 \left(\mu, \mathbb R^\ell_+\right)$ and
  $\omega \in \Omega$. To verify the claim, one needs
  to verify that for all  $g \in L_1 \left(\mu, \mathbb R^\ell_+
  \right)$, $\Theta(g,\cdot)$ is measurable. Since $\Theta (\cdot,
  \omega): L_1\left(\mu, \mathbb R^\ell_+\right) \to \mathbb
  R_+$ is norm-continuous, it suffices to show that
  $\Theta (g, \cdot): \Omega \to \mathbb R_+$ is measurable for
  every simple function $g= \sum_{j= 1}^r x_j \chi_{T_j}$, where
  $x_j\in \mathbb R^\ell_+$. To this end, consider the function
  $\Gamma: T\times \Omega \to \mathbb R_+$ such that
  \[
  \Gamma (t, \omega)= {\rm dist}\left(g(t), C_n^X(t, \omega)\right)
  \]
  for all $(t, \omega)\in T\times \Omega$. Since $\Gamma$ is constant
  on each $(T^n_i\cap T_j)\times \Omega_i^n$, it is jointly
  measurable. Note that
  \[
  \Gamma(t,\omega) \le \|g(t)-b_n(t,\omega)\|
  \]
  for all $(t,\omega) \in T\times \Omega$. This implies for all
  $\omega \in \Omega$, $\Gamma(\cdot, \omega)$ is integrable.
  Thus, $\Theta(g, \cdot)$ is measurable and the claim is
  verified if one can show for all $\omega\in \Omega$,
  \[
  \int_T \Gamma(\cdot, \omega)d\mu =\Theta(g, \omega).
  \]
  Assume that
  \[
  \int_T \Gamma(\cdot, \omega_0)d\mu <\Theta(g, \omega_0)
  \]
  for some $\omega_0 \in \Omega$. Pick $\varepsilon> 0$ such that
  \[
  \int_T \Gamma(\cdot,
  \omega_0)d\mu+ \varepsilon\mu(T)< \Theta(g,\omega_0).
  \]
  Further, pick $t\in T_i^n\cap T_j$ and $y_{(i,j)}\in
  C_n^X(t, \omega_0)$ so that
  \[
  \|x_j- y_{(i, j)}\|< \Gamma(t,\omega_0)+ \varepsilon.
  \]
  Then the function $\zeta:
  T\to \mathbb R^\ell_+$, defined by $\zeta(t)= y_{(i, j)}$
  if $t\in T_i^n\cap T_j$, belongs to $H_n(\omega_0)$ and
  \[
  \|g- \zeta\|_1< \int_T \Gamma(\cdot, \omega_0)d\mu+
  \varepsilon\mu(T),
  \]
  which is a contradiction.

  \medskip
  {\it Claim 2. The correspondence $\int_T C_n^{X} (\cdot,
  \cdot)d\mu: (\Omega, \mathscr F, \nu) \rightrightarrows
  \mathbb R^\ell_+$ is lower measurable}.
  Consider the function $\xi: L_1\left(\mu,\mathbb R^\ell_+
  \right)\to \mathbb R^\ell_+$ defined by $\xi(f)= \int_T
  f d\mu$ for all $f \in L_1\left(\mu,\mathbb R^\ell_+
  \right)$. Let $V$ be an open subset of $\mathbb R^\ell_+$.
  Note that
  \[
  \xi \circ H_n (\omega)= \int_T C_n^{X}(\cdot,\omega)d\mu
  \]
  for all $\omega\in \Omega$, and
  \[
  (\xi \circ H_n)^{-1} (V)
  =\{\omega\in \Omega: H_n (\omega)\cap \xi^{-1}(V)\neq
  \emptyset\}.
  \]
  Since $\xi$ is norm-continuous, by Claim 1, $(\xi \circ
  H_n)^{-1} (V) \in \mathscr F$. This verifies the claim.

  \bigskip
  {\it Claim 3. For each $\omega \in\Omega$,}
  \[
  {\rm Li}\int_T C_n^{X}(\cdot, \omega)d\mu =
  {\rm Ls} \int_T C_n^{X}(\cdot, \omega)d\mu=\int_T C^{X}
  (\cdot, \omega, p)d\mu.
  \]
  To see this, for each $\omega\in \Omega$, put
  \[
  \alpha(\cdot,\omega)= \sup \left\{b_1(\cdot, \omega),\cdots,
  b_{N-1}(\cdot, \omega), b(\cdot, \omega, p)+ \left
  (\frac{\ell}{\delta(p)},\cdots, \frac{\ell}{\delta(p)}
  \right)\right\}.
  \]
  Then, $C^{X}(\cdot, \omega, p)$ and all $C_n^{X}(\cdot,\omega)$
  are integrably bounded by $\alpha(\cdot, \omega)$. Now, it
  suffices to verify that for all $t\in T$,
  \[
  {\rm Ls} C_n^{X}(t, \omega)\subseteq C^{X}(t, \omega, p),
  \]
  and
  \[
  C^{X}(t, \omega, p) \subseteq {\rm Li} C_n^{X}(t, \omega).
  \]
  First, let $x\in {\rm Ls} C_n^{X}(t,\omega)$.
  If $x= b(t, \omega, p)$, then $\{b_n(t, \omega): n \ge 1\}$
  converges to $x$ and $b_n(t,\omega) \in C_n^X(\cdot,\omega)$
  for all $n\ge 1$. Otherwise,
  there exist positive integers $n_1<n_2  <n_3 <\cdots$ and for
  each $k$ a point $x_k\in C_{n_k}^{X}(t, \omega)$ such that
  $\{x_k: k\ge 1\}$ converges to $x$. Obviously, $x_k\neq
  b_{n_k}(t, \omega)$ for all sufficiently large $k$, and $x\in
  X(t,\omega, p)$. If $x\notin C^{X}(t,\omega,p)$, there exists
  some $y\in B(t, \omega, p)$ such that $U(t,\omega,y)> U(t,
  \omega, x)$. By the continuity of $U(t,\omega,\cdot)$, $y$
  can be chosen so that $\langle p, y\rangle<\langle p,
  a(t,\omega)\rangle$. Since $\{X_{n_k}(t,\omega): k\ge 1\}$
  converges to $X(t,\omega, p)$ in the Hausdorff metric
  topology, there exists a sequence $\{y_k: k\ge 1\}$ such
  that $y_k\in X_{n_k} (t,\omega)$ for all $k\ge 1$ and
  $\{y_k: k\ge 1\}$ converges to $y$. By the inequality
  \begin{eqnarray*}
  |U(t,\omega, x)- \psi_{n_k}(t, \omega, x_k)|
  &<& |U(t,\omega, x)- U(t,\omega, x_k)|\\
  &+& |U(t,\omega,x_k)- \psi_{n_k}(t,\omega, x_k)|,
  \end{eqnarray*}
  the continuity of $U(t,\omega, \cdot)$ and the uniform
  convergence of $\psi_{n_k}(t, \omega, \cdot)$ to
  $U(t,\omega, \cdot)$, one concludes that $\psi_{n_k}(t,
  \omega, y_k)> \psi_{n_k}(t, \omega, x_k)$ and $\langle
  p, y_k\rangle< \langle p, a_{n_k}(t, \omega)\rangle$ for
  sufficiently large $k$, which contradicts with the fact
  that $x_k\in C_{n_k}^{X}(t, \omega)$ for all $k\ge 1$.
  Hence, $x\in C^{X}(t, \omega, p)$. Now, let $d\in C^{X}
  (t,\omega, p)$. If $d= b(t, \omega, p)$, there is
  nothing to verify. Thus, $d\in {\rm Li}
  C_n^{X}(t, \omega)$. Assume $d< b(t, \omega, p)$. Similar
  to that in the proof of Theorem \ref{thm:contintC},
  one can show that $d\in {\rm Li}C_n^{X}(t, \omega)$.

  \medskip
  To complete the proof, for each $\omega \in \Omega$, put
  \[
  M(\omega)= \left\{x\in \mathbb R^\ell_+: x\le \int_T
  \alpha(\cdot, \omega)d\mu\right\}.
  \]
  Clearly,
  $\int_T C_n^{X}(\cdot, \omega)d\mu$ and $\int_T C^{X}(\cdot,
  \omega, p)d\mu$ are contained in the compact set $M(\omega)$.
  It follows from Claim 3 that $\left\{\int_T C_n^{X}(\cdot, \omega)
  d\mu: n \ge 1\right\}$ converges to $\int_T C^{X}(\cdot, \omega, p)
  d\mu$ in $M(\omega)$ in the Hausdorff metric topology. It is
  well known that a nonempty compact-valued correspondence is
  lower measurable if and only if it is measurable when viewed
  as a single-valued function whose range space is the space of
  nonempty compact sets endowed with the Hausdorff metric topology.
  By Claim 2, $\int_T C^{X}(\cdot, \cdot, p)d\mu$ is lower
  measurable.
  \end{proof}

  \begin{corollary} \label{coro:jointmea}
  Assume \emph{({\bf A$_1$})-({\bf A$_3$})}. Then $\int_T C^X
  (\cdot, \cdot, \cdot)d\mu:\Omega\times \Delta\rightarrow
  {\mathscr K}_0(\mathbb R^\ell_+)$ is a jointly measurable
  function, where ${\mathscr K}_0 (\mathbb R^\ell_+)$ is endowed
  with the Hausdorff metric topology.
  \end{corollary}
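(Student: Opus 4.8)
The plan is to recognize that Corollary \ref{coro:jointmea} is an instance of the classical fact that a Carath\'eodory map --- measurable in one variable, continuous in the other --- from the product of a measurable space and a separable metric space into a metric space is jointly measurable. Write $F = \int_T C^X(\cdot,\cdot,\cdot)d\mu$. By Proposition \ref{prop:Clowermeasure}, $F$ takes values in $\mathscr K_0(\mathbb R^\ell_+)$, so it is legitimate to view it as a single-valued map $F: \Omega\times \Delta \to \mathscr K_0(\mathbb R^\ell_+)$, the target carrying the Hausdorff metric. Theorem \ref{thm:contintC} says that $F(\omega,\cdot):\Delta \to \mathscr K_0(\mathbb R^\ell_+)$ is continuous for every fixed $\omega\in \Omega$, while Theorem \ref{thm:intmeasure}, together with the equivalence between lower measurability of a nonempty compact-valued correspondence and measurability as a $\mathscr K_0$-valued function recalled at the end of its proof, says that $F(\cdot,p):\Omega\to \mathscr K_0(\mathbb R^\ell_+)$ is $\mathscr F$-measurable for every fixed $p\in \Delta$.

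To pass from separate to joint measurability I would exploit the separability of $\Delta\subseteq \mathbb R^\ell$. Fix a countable dense subset $\{p_k:k\ge 1\}$ of $\Delta$. For each $n\ge 1$ form a countable Borel partition $\{D_k^n:k\ge 1\}$ of $\Delta$ with $\mesh\{D_k^n:k\ge1\}\le 1/n$ and $p_k\in D_k^n$ whenever $D_k^n\ne \emptyset$ (for instance, take $D_k^n$ to consist of those points of $\Delta$ lying in the ball of radius $1/n$ about $p_k$ whose nearest point among $p_1,\dots,p_k$ is $p_k$, then disjointify in the obvious way). Define $F_n:\Omega\times \Delta \to \mathscr K_0(\mathbb R^\ell_+)$ by $F_n(\omega,p)=F(\omega,p_k)$ for $p\in D_k^n$. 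Each $F_n$ is jointly measurable, since on $\Omega\times D_k^n$ it equals $F(\cdot,p_k)$ composed with the projection onto $\Omega$, which is $\mathscr F\otimes \mathfrak B(\Delta)$-measurable, and $\{\Omega\times D_k^n:k\ge 1\}$ is a countable measurable partition of $\Omega\times \Delta$.

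Finally, for every $(\omega,p)$ one has $p\in D_{k(n)}^n$ with $p_{k(n)}\to p$ as $n\to\infty$ (because the mesh tends to $0$), so by the continuity supplied by Theorem \ref{thm:contintC} the sequence $F_n(\omega,p)=F(\omega,p_{k(n)})$ converges to $F(\omega,p)$ in the Hausdorff metric. Thus $F$ is the pointwise limit of the jointly measurable maps $F_n$ into the metric space $\mathscr K_0(\mathbb R^\ell_+)$, and a pointwise limit of measurable functions with metrizable range is measurable; hence $F$ is jointly measurable. I do not anticipate a genuine obstacle: the only points needing a little care are the bookkeeping of the partitions $\{D_k^n\}$, the verification that each $F_n$ is honestly $\mathscr F\otimes \mathfrak B(\Delta)$-measurable, and the pointwise convergence $F_n\to F$; everything else is the standard Carath\'eodory-to-jointly-measurable argument.
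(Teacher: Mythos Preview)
Your proposal is correct and follows essentially the same route as the paper: both invoke Theorem~\ref{thm:contintC} for continuity in $p$, Theorem~\ref{thm:intmeasure} (plus the equivalence between lower measurability and measurability into $\mathscr K_0$) for measurability in $\omega$, and then conclude joint measurability from the Carath\'eodory property. The only difference is that the paper simply cites the Carath\'eodory-implies-jointly-measurable fact (stated in the appendix), whereas you spell out the standard approximation argument via countable Borel partitions of $\Delta$.
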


  \begin{proof}
  By Theorem \ref{thm:contintC}, for every $\omega \in \Omega$,
  $\int_T C^X (\cdot, \omega,\cdot)d\mu: \Omega \times \Delta
  \to {\mathscr K}_0\left(\mathbb R^\ell_+\right)$ is continuous. Furthermore, by Theorem \ref{thm:intmeasure}, for every $p\in
  \Delta$, the correspondence $\int_T C^{X} (\cdot, \cdot,p)
  d\mu: \Omega \rightrightarrows \mathbb R^\ell_+$ is lower
  measurable. Hence, for every $p\in \Delta$, $\int_T C^{X}
  (\cdot,\cdot,p)d\mu: \Omega \rightarrow {\mathscr K}_0
  (\mathbb R^\ell_+)$ is measurable. This means
  that $\int_T C^{X} (\cdot, \cdot, \cdot) d\mu:
  \Omega \times \Delta\rightarrow {\mathscr K}_0(\mathbb
  R^\ell_+)$ is Carath\'{e}odory, and therefore is jointly
  measurable.
  \end{proof}

  \section{The existence of a maximin REE}
  \label{sec:existence}

  A \emph{price system} of $\mathscr E$ is a measurable
  function $\pi: (\Omega, \mathscr F, \nu) \to \Delta$. Let
  $\sigma(\pi)$ be the smallest sub-algebra of $\mathscr F$ such
  that $\pi$ is measurable and let ${\mathscr G}_t={\mathscr F}_t
  \vee \sigma (\pi)$. For each $\omega\in \Omega$, let ${\mathscr G}_t
  (\omega)$ denote the smallest element of ${\mathscr G}_t$ containing
  $\omega$. Given $t \in T$, $\omega \in \Omega$ and a
  price system $\pi$, let $B^{REE}(t,\omega,\pi)$ be defined by
   \[
  B^{REE}(t, \omega, \pi)= \left\{x\in (\mathbb R^\ell_+)^\Omega:
  x(\omega^\prime)\in B(t, \omega^\prime, \pi(\omega^\prime))
  \mbox{ for all } \omega^\prime \in \mathscr G_t(\omega)\right\}.
  \]
  The \emph{maximin utility} of each agent $t\in T$ with
  respect to $\mathscr G_t$ at an allocation $f: T \times \Omega
  \to \mathbb R^\ell_+$ in state $\omega \in \Omega$, denoted
  by $\b{\it U}^{REE}(t, \omega, f(t,\cdot))$, is defined by
  \[
  \b{\it U}^{REE}(t,\omega, f(t,\cdot))= \inf_{\omega'\in
  \mathscr G_t (\omega)} U(t, \omega', f(t,\omega')).
  \]

  \begin{definition}\label{def:maximinree}
  Given a feasible allocation $f$ and a price system $\pi$, the
  pair $(f, \pi)$ is called a \emph{maximin rational expectations
  equilibrium} of $\mathscr E$ if $f(t, \omega)\in B(t, \omega,
  \pi(\omega))$ for almost all $(t, \omega) \in T \times \Omega$,
  and $f(t, \cdot)$ maximizes $\b{\it U}^{REE}(t, \omega, \cdot)$
  on $B^{REE}(t, \omega, \pi)$.
  \end{definition}

  An allocation $f$ is called a \emph{maximin rational expectations
  allocation} if there exists a price system $\pi$ such that $(f,
  \pi)$ is a maximin rational expectations equilibrium. The set of maximin rational expectations allocations is denoted
  by $MREE(\mathscr E)$. Recently, de Castro et al.
  \cite{de Castro-Pesce-Yannelis:11} showed that $MREE(\mathscr E)
  \ne \emptyset$ when $\Omega$ and $T$ are finite. Our next
  theorem extends their result to a more general case.

  \begin{theorem}\label{thm:Aumann}
  Under \emph{({\bf A$_1$})-({\bf A$_4$})}, $MREE(\mathscr E) \ne
  \emptyset$.
  \end{theorem}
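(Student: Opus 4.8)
The plan is to construct, for each state $\omega\in\Omega$, a Walrasian equilibrium of the deterministic economy $\mathscr E(\omega)$ in a measurably selected way, and then glue these state-wise equilibria together into a pair $(f,\pi)$ that satisfies Definition \ref{def:maximinree}. The starting point is the observation already recorded in Section \ref{sec:model}: for each fixed $\omega$, Theorem 2 in \cite[p.~151]{Hildenbrand:74} yields a price $p(\omega)\in\Delta$ and an allocation $f(\cdot,\omega)$ with $(f(\cdot,\omega),p(\omega))$ a Walrasian equilibrium of $\mathscr E(\omega)$; assumptions $({\bf A}_1)$--$({\bf A}_4)$ are exactly what that existence result needs (endowments strictly positive in the aggregate, continuous monotone concave utilities). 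The first main step is therefore to promote this pointwise existence to a jointly measurable selection $\omega\mapsto(\pi(\omega),f(\cdot,\omega))$. Here is where the work of Section \ref{sec:aggregate} is used: a Walrasian equilibrium price at $\omega$ is characterized by a fixed-point/market-clearing condition expressed through the aggregate preferred correspondence $\int_T C^X(\cdot,\omega,p)\,d\mu$, and by Corollary \ref{coro:jointmea} this correspondence is a Carath\'eodory (hence jointly measurable) map into $\mathscr K_0(\mathbb R^\ell_+)$. The set of equilibrium prices at $\omega$ is thus obtained as the preimage, under a jointly measurable map, of a closed condition (roughly, $0\in \int_T C^X(\cdot,\omega,p)\,d\mu - \int_T a(\cdot,\omega)\,d\mu$ together with Walras' law from Proposition \ref{prop:implication}), so $\omega\mapsto\{\text{equilibrium prices at }\omega\}$ is a measurable, nonempty, closed-valued correspondence on $(\Omega,\mathscr F,\nu)$; the Kuratowski--Ryll-Nardzewski selection theorem \cite{Kuratowski:65} then delivers a measurable price system $\pi$. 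Applying the same selection machinery to $\omega\mapsto \mathscr S_{C^X(\cdot,\omega,\pi(\omega))}\cap\{g:\int g\,d\mu=\int a(\cdot,\omega)\,d\mu\}$ (which is nonempty by the equilibrium property and measurable by the arguments underlying Claims 1--2 in the proof of Theorem \ref{thm:intmeasure}) produces a measurable selection $\omega\mapsto f(\cdot,\omega)$; completeness of $(\Omega,\mathscr F,\nu)$ lets us fix null-set issues so that $f$ is an allocation and is feasible.

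The second main step is to verify that the pair $(f,\pi)$ so constructed is a maximin REE, not merely a state-wise Walrasian equilibrium. By construction $f(t,\omega)\in B(t,\omega,\pi(\omega))$ for almost all $(t,\omega)$, so feasibility and the budget constraint hold. It remains to show that for a.e.\ $t$, $f(t,\cdot)$ maximizes $\b{\it U}^{REE}(t,\omega,\cdot)$ over $B^{REE}(t,\omega,\pi)$. The key point is that a Walrasian allocation for $\mathscr E(\omega)$ is utility-maximizing in the budget set $B(t,\omega,\pi(\omega))$ state by state: for a.e.\ $t$ and every $\omega'$, $U(t,\omega',f(t,\omega'))\ge U(t,\omega',x(\omega'))$ whenever $x(\omega')\in B(t,\omega',\pi(\omega'))$. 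Given any $x\in B^{REE}(t,\omega,\pi)$ we have $x(\omega')\in B(t,\omega',\pi(\omega'))$ for all $\omega'\in\mathscr G_t(\omega)$, so $U(t,\omega',f(t,\omega'))\ge U(t,\omega',x(\omega'))$ for each such $\omega'$; taking infimum over $\omega'\in\mathscr G_t(\omega)$ gives $\b{\it U}^{REE}(t,\omega,f(t,\cdot))\ge \b{\it U}^{REE}(t,\omega,x)$. This is the crucial structural fact that makes the maximin formulation tractable compared with the Bayesian one: because the maximin functional is the infimum of the state-wise utilities and the state-wise allocations are already Walrasian, no measurability of $f(t,\cdot)$ with respect to $\mathscr G_t$ is required, and pointwise optimality upgrades automatically to maximin optimality.

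The step I expect to be the main obstacle is the joint-measurability of the equilibrium price selection in the first step. Pointwise existence of a Walrasian price is standard, but one must express ``$p$ is an equilibrium price at $\omega$'' as a measurable condition in $(\omega,p)$ and then show the resulting correspondence is closed-valued with nonempty values, so that a measurable selection exists; this is precisely the reason Section \ref{sec:aggregate} was developed, and the delicate part is checking that the market-clearing set $\{(\omega,p): \int_T a(\cdot,\omega)\,d\mu\in \int_T C^X(\cdot,\omega,p)\,d\mu\}$ is closed in $p$ and measurable in $\omega$ --- which follows from Hausdorff continuity in $p$ (Theorem \ref{thm:contintC}) and lower measurability in $\omega$ (Theorem \ref{thm:intmeasure}), combined via Corollary \ref{coro:jointmea}. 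A secondary technical point is handling the exceptional null sets of agents and states carefully, using completeness of the state space and finiteness of $\mu$, so that ``almost all'' clauses in Definition \ref{def:maximinree} are met; I would dispose of this at the end by a routine argument. Once the measurable price $\pi$ is in hand, the selection of $f$ and the verification in the second step are comparatively routine applications of the tools already assembled.
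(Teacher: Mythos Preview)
Your proposal is correct and follows essentially the same route as the paper: define the excess preferred-set correspondence $Z(\omega,p)=\int_T C^X(\cdot,\omega,p)\,d\mu-\int_T a(\cdot,\omega)\,d\mu$, use Corollary~\ref{coro:jointmea} to get joint measurability, let $F(\omega)=\{p\in\Delta:0\in Z(\omega,p)\}$ (nonempty by Hildenbrand), select a measurable price $\hat\pi$, pick a feasible $f$ with $f(t,\omega)\in C^X(t,\omega,\hat\pi(\omega))$, use Proposition~\ref{prop:implication} plus feasibility to force $f(t,\omega)\in B(t,\omega,\hat\pi(\omega))$ almost everywhere, repair on null sets, and then upgrade state-wise optimality to maximin optimality exactly as you describe. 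The only notable divergence is the selection theorem: the paper invokes the Aumann--Saint-Beuve theorem directly from ${\rm Gr}_F=Z^{-1}(\{0\})\in\mathscr F\otimes\mathfrak B(\Delta)$, whereas you propose Kuratowski--Ryll-Nardzewski, which requires you to first argue lower measurability of $F$ from its measurable graph (via the projection theorem and completeness of $(\Omega,\mathscr F,\nu)$)---a valid but slightly less direct path. Also note that the paper's definition of an allocation demands only $f(\cdot,\omega)\in L_1(\mu,\mathbb R^\ell_+)$ for each $\omega$, so your concern about measurably selecting $\omega\mapsto f(\cdot,\omega)$ is unnecessary here.
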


  \begin{proof}
  Consider the correspondence $Z:\Omega\times \Delta
  \rightrightarrows \mathbb R^\ell$ defined by
  \[
  Z(\omega, p)= \int_T C^{X}(\cdot, \omega,
  p)d\mu- \int_T a(\cdot, \omega)d\mu.
  \]
  By Proposition \ref{prop:Clowermeasure}, $Z$ is non-empty
  compact-valued. In addition, by Corollary \ref{coro:jointmea}
  and ({\bf A$_1$}),
  $Z: \Omega\times \Delta \rightarrow \mathbb {\mathscr K}_0
  (\mathbb R^\ell)$ is jointly measurable. Define another
  correspondence $F: \Omega\rightrightarrows \Delta$ such
  that
  \[
  F(\omega)=\{p\in \Delta: Z(\omega, p)\cap \{0\}\neq
  \emptyset\}.
  \]
  By Theorem 2 in \cite[p.151]{Hildenbrand:74},
  $F$ is non-empty valued. Since
  \[
  {\rm Gr}_F=Z^{-1}(\{0\}),
  {\rm Gr}_{F} \in \mathscr F\otimes \mathfrak B(\Delta),
  \]
  by the Aumann-Saint-Beuve measurable selection theorem, there
  exists a measurable function $\hat{\pi}:\Omega\to \Delta$
  such that $\hat{\pi}(\omega)\in F (\omega)$ for all $\omega
  \in \Omega$. By the definition of $Z$, there exists an
  allocation $f$ such that $f(t, \omega)\in C^{X}(t,\omega,
  \hat{\pi}(\omega))$ and
  \[
  \int_T f(\cdot, \omega)d\mu= \int_T a(\cdot, \omega)d\mu
  \]
  for almost all $t\in T$ and all $\omega \in \Omega$.
  By Proposition \ref{prop:implication}, one has
  \[
  \langle \hat{\pi}(\omega), f(t,\omega)\rangle\ge \langle
  \hat{\pi}(\omega), a(t,\omega)\rangle
  \]
  for almost all $t\in T$ and all $\omega \in \Omega$. Then,
  the previous equation implies
  \[
  \langle \hat{\pi}(\omega), f(t,\omega)\rangle= \langle
  \hat{\pi}(\omega), a(t,\omega) \rangle
  \]
  for almost all $t\in T$ and all $\omega \in
  \Omega$. Thus, $f(t, \omega)\in B(t, \omega, \hat{\pi}
  (\omega))$ for almost all $t\in T$ and all $\omega\in
  \Omega$. For every $\omega\in \Omega$, define $T_\omega
  \subseteq T$ by
  \[
  T_\omega=\{t\in T: f(t, \omega) \in B(t, \omega,
  \hat{\pi}(\omega))\cap C(t,\omega, \hat{\pi}(\omega))
  \}.
  \]
  Then, $\mu(T_\omega)= \mu(T)$ for all $\omega \in \Omega$.
  Next, for every $\omega \in \Omega$ and every $t\in T
  \setminus T_\omega$, as $B(t, \omega, \hat{\pi} (\omega))
  \cap C(t, \omega, \hat{\pi} (\omega)) \ne \emptyset$, one
  can pick a point
  \[
  h(t, \omega)\in B(t, \omega, \hat{\pi} (\omega))\cap
  C(t, \omega, \hat{\pi}(\omega)),
  \]
  and then define a function $\hat f: T \times \Omega \to \mathbb
  R^\ell_+$ such that
  \[
  \hat f(t, \omega)= \left\{
  \begin{array}{ll}
  f(t, \omega), & \mbox{if $t\in T_\omega$;}
  \\[0.5em]
  h(t, \omega), & \mbox{if $t\in T\setminus T_\omega$.}
  \end{array}
  \right.
  \]
  It is obvious that $\hat f(t, \omega)\in B(t, \omega,
  \hat\pi(\omega)) \cap C(t,\omega, \hat\pi(\omega))$ for all
  $(t, \omega) \in T \times \Omega$.
  Assume that there are an agent $t_0 \in T$, a state of nature
  $\omega_{t_0}\in \Omega$ and an element $y(t_0, \cdot)
  \in B^{REE} (t_0, \omega_{t_0}, \hat{\pi})$ such that
  \[
  \b {\it U}^{REE}(t_0, \omega_{t_0}, y(t_0, \cdot))>
  \b{\it U}^{REE}(t_0, \omega_{t_0}, \hat f(t_0, \cdot)).
  \]
  Then, one obtains
  \[
  U(t_0, \omega_{t_0}^\prime, y(t_0, \omega_{t_0}^\prime))
  > U(t_0, \omega_{t_0}^\prime, \hat f(t_0, \omega_{t_0}^\prime))
  \]
  for some $\omega_{t_0}^\prime\in \mathscr G_{t_0} (\omega_{t_0})$,
  which contradicts with $\hat f(t_0,
  \omega_{t_0}^\prime)\in C(t_0, \omega_{t_0}^\prime,
  \hat{\pi}(\omega_{t_0}^\prime))$. This verifies that $(\hat f,
  \hat{\pi})$ is a maximin rational expectations equilibrium of
  $\mathscr E$.
  \end{proof}

  \section{Conclusion} \label{sec:conclusion}
  The first application of maximin expected utility functions to
  the general equilibrium theory with differential information appeared
  in Correia da Silva and Herv\'{e}s Beloso \cite{Correia-Herves:09},
  where an existence theorem for a Walrasian equilibrium in an
  economy was established. However, their MEU formulation is in
  the ex-ante sense, and REE notion was not considered.

  \medskip
  In our paper, an existence theorem on a maximin rational expectations
  equilibrium (maximin REE) for an exchange differential
  information economy is proved. Comparing with the existence result
  on maximin REE in \cite{de Castro-Pesce-Yannelis:11}, our theorem
  applies to a more general economic model with an arbitrary finite
  measure space of agents and an arbitrary complete probability
  measure space as the space of states of nature, while the later
  applies only to an economic model which has finitely many agents
  and finitely many states of nature. Assumptions in our paper are similar to those in \cite{de Castro-Pesce-Yannelis:11}, except
  the joint measurability and continuity of utility functions,
  and the joint measurability of the initial endowment function.
  The proof techniques in this paper are quite different from
  those in \cite{de Castro-Pesce-Yannelis:11}. Since there are
  only finitely many agents and states of nature in the model
  considered in \cite{de Castro-Pesce-Yannelis:11}, neither
  measurability nor continuity of utility functions and the
  initial endowment function plays any role in the proof of
  the existence of a maximin REE. Instead, the existence of a
  competitive equilibrium for complete information economies is
  applied. In contrast, both measurability and continuity of
  utility functions and the initial endowment function play
  key roles in this paper.
  To establish the existence theorem, Aumann's techniques in
  \cite{Aumann:66} are adopted, measurability and continuity of
  the aggregate preferred correspondence are investigated. However,
  for special cases, the techniques can be simplified. For instance,
  if there are finitely many states of nature, one can still apply
  the approach employed in \cite{de Castro-Pesce-Yannelis:11} and
  obtains an existence theorem. On the other hand, if there are
  finitely many agents, then one can show that the demand of each
  agent is ${\mathscr F} \otimes {\mathfrak B}(\Delta)$-measurable
  and so is the aggregate demand. Then, an approach similar to
  that in the proof of Theorem \ref{thm:Aumann} can be applied to
  establish the existence theorem.

  \bigskip
  \centerline{\large \sc Appendix}

  \bigskip
  Let $G$ be a non-empty set and $\mathbb R^\ell$ be the $\ell$-dimensional Euclidean space. On $\mathbb R^\ell$, two
  different but equivalent standard norms $\|\cdot\|_\infty$
  and $\|\cdot\|_1$ are used in this paper, where for each
  pint $x=(x_1, x_2, \cdots, x_\ell)\in \mathbb R^\ell$,
  \[
  \|x\|_\infty = \max \{|x_i|: 1\le i \le \ell\},
  \]
  and
  \[
  \|x\|_1 = \sum_{1\le i \le \ell} |x_i|.
  \]
  A \emph{correspondence} $F: G \rightrightarrows \mathbb R^\ell$
  from $G$ to $\mathbb R^\ell$ assigns to each  $x\in G$ a subset
  $F(x)$ of $\mathbb R^\ell$. Meanwhile, $F$ can also be viewed
  as a function $F: G \to 2^{\mathbb R^\ell}$, where $2^{\mathbb
  R^\ell}$ denotes the power set of $\mathbb R^\ell$. Further,
  $F$ is called non-empty valued (resp. closed-valued,
  compact-valued, convex-valued) if $F(x)$ is a non-empty (resp.
  closed, compact, convex) subset of $\mathbb R^\ell$ for all
  $x\in G$. The \emph{graph} of $F$, denoted by ${\rm Gr}_F$, is 
  defined by
  \[
  {\rm Gr}_F = \{ (x,y) \in G \times \mathbb R^\ell: y \in F(x)\
  {\rm and}\ x \in G\}.
  \]
  For each point $x \in {\mathbb R}^\ell$ and a subset $A \in
  2^{\mathbb R^\ell}\setminus \{\emptyset \}$, define
  \[
  {\rm dist} (x, A) = \inf \{d(x,y): y \in A \},
  \]
  where $d$ is the Euclidean metric on $\mathbb R^\ell$.
  Let ${\mathscr K}_0\left(\mathbb
  R^\ell\right)$ be the family of non-empty compact subsets of
  $\mathbb R^\ell$. Recall that the \emph{Hausdorff metric} $H$ on
  ${\mathscr K}_0\left(\mathbb R^\ell\right)$ is defined such that
  for any two $A, B \in {\mathscr K}_0\left(\mathbb R^\ell\right)$,
  \[
  H(A, B)= \max\left\{\sup_{a\in A}\ {\rm dist}(a, B), \
  \sup_{b\in B}\ {\rm dist}(b, A)\right\}.
  \]
  For equivalent definitions
  of $H$, refer to \cite{aubin:08}. The topology ${\mathscr T}_H$ on
  ${\mathscr K}_0 (\mathbb R^\ell)$, generated by $H$, is called
  the \emph{Hausdorff metric topology}. For a closed subset $M$
  of $\mathbb R^\ell$, ${\mathscr K}_0(M)$ and the Hausdorff
  metric $H$ on ${\mathscr K}_0(M)$ can be defined similarly.
  When $G$ is a topological space, a non-empty compact-valued correspondence $F: G \rightrightarrows \mathbb R^\ell$ is
  called \emph{Hausdorff continuous} if $F: G \to \left(
  {\mathscr K}_0\left(\mathbb R^\ell\right), {\mathscr T}_H
  \right)$ is continuous. This statement also holds when
  $\mathbb R^\ell$ is replaced by a closed subset $M$ of
  $\mathbb R^\ell$.

  \bigskip
  Let $\{A_n: n\ge 1\}$ be a sequence of non-empty subsets of $\mathbb
  R^\ell$. A point $x\in \mathbb R^\ell$ is called a \emph{limit
  point} of $\{A_n: n\ge 1\}$ if there exist $N\ge 1$ and points
  $x_n\in A_n$ for each $n \ge N$ such that $\{x_n: n \ge N\}$
  converges to $x$. The set of all limit points of $\{A_n: n\ge
  1\}$ is denoted by ${\rm Li}A_n$. Similarly, a point $x\in
  \mathbb R^\ell$ is called a \emph{cluster point} of $\{A_n: n
  \ge 1\}$ if there exist positive integers $n_1< n_2 <\cdots$
  and for each $k$ a point $x_k \in A_{n_k}$ such that $\{x_k: k
  \ge 1\}$ converges to $x$. The set of all cluster points of
  $\{A_n: n\ge 1\}$ is denoted by ${\rm Ls}A_n$.
  It is clear that ${\rm Li}A_n\subseteq {\rm Ls}A_n$. If
  ${\rm Ls}A_n\subseteq {\rm Li}A_n$,
  \[
  {\rm Li}A_n= {\rm Ls}A_n= A
  \]
  is called the \emph{limit} of the sequence
  $\{A_n: n\ge 1\}$. If $A$ and all $A_n'$s are closed and
  contained in a compact subset $M \subseteq \mathbb R^\ell$,
  then it is well known that
  \[
  {\rm Li}A_n={\rm Ls}A_n= A
  \]
  if and only if $\{A_n: n \ge 1\}$ converges to $A$ in Hausdorff
  metric topology on ${\mathscr K}_0(M)$, refer to \cite{aubin:08}.

  \bigskip
  Let $(T, \Sigma, \mu)$ be a measure space and $\{F_n: n\ge 1
  \}, F: (T, \Sigma, \mu) \rightrightarrows \mathbb R^\ell_+$ be
  correspondences. Recall that $F: (T,\Sigma, \mu) \rightrightarrows
  \mathbb R^\ell$ is said to be \emph{lower measurable}
  if
  \[
  F^{-1}(V)=\{t\in T: F(t) \cap V \ne \emptyset\} \in
  \Sigma
  \]
  for every open subset $V$ of $\mathbb R^\ell$.
  It is well known that
  a non-empty closed-valued correspondence $F: (T,\Sigma, \mu)
  \rightrightarrows \mathbb R^\ell$ is lower measurable if and
  only if there exists a sequence of measurable selections
  $\{f_n: n\ge 1\}$ of $F$ such that for all $t\in T$,
  \[
  F(t)= \overline {\{f_n(t): n\ge 1\}}.
  \]
  If all $F_n's$ are non-empty closed-valued and lower
  measurable and at least one of $F_n's$ is compact-valued,
  $\bigcap_{n\ge 1} F_n$ is lower measurable, refer to
  \cite{Himmelberg:75}. If all $F_n's$ are integrably bounded
  by the same function, then
  \[
  {\rm Ls} \int_T F_nd\mu \subseteq \int_T
  {\rm Ls} F_n d\mu,
  \]
  and
  \[
  \int_T {\rm Li} F_n  d\mu\subseteq {\rm Li} \int_T F_n d\mu.
  \]
  If $(S, \mathscr S, \nu)$ is another measure space and
  $f: T \times S \to \mathbb R^\ell$ is a jointly measurable
  function, then it is well known that $\int_T: f(\cdot,
  \cdot)d\mu: S \to \mathbb R^\ell$ is a measurable function.
  Let  $M \subseteq \mathbb R^\ell$ be endowed with the relative
  Euclidean topology, and $(Y,\varrho)$ be a metric space. A
  function $f: T\times M \to (Y,\varrho)$ is called
  \emph{Carath\'{e}odory} if $f(\cdot,x)$ is measurable for all
  $x\in M$, and $f(t, \cdot)$ is continuous for all $t\in T$.
  It is known that any Carath\'{e}odory function is jointly
  measurable with respect to the Borel structure on $M$.
  A  \emph{selection} of $F$ is a single-valued function $f:
  (T,\Sigma, \mu) \to \mathbb R^\ell$ such that $f(t)\in F(t)$
  for almost all $t\in T$. If a selection $f$ of $F$ is measurable
  (resp. integrable), then it is called a \emph{measurable} (resp.
  an \emph{integrable}) \emph{selection}. Let ${\mathscr S}_F$
  denote the set of all integrable selections of $F$. The
  \emph{integration} of $F$ over $T$ in the sense of Aumann
  in \cite{Aumann:65} is a subset of $\mathbb R^\ell$, defined as
  \[
  \int_T F d\mu =\left\{ \int_T fd\mu: f\in {\mathscr S}_F
  \right\}.
  \]
  If $F$ is non-empty closed-valued and integrably bounded, then
  $\int_T F d\mu$ is compact, refer to
  \cite{Hildenbrand:74}. The following two theorems on measurable
  selections have been employed in this paper.

  \bigskip
  \noindent
  {\bf Kuratowski-Ryll-Nardzewski Measurable Selection Theorem
  (\cite{Himmelberg:75,Kuratowski:65}).}
  \emph{If $F: (T,\Sigma,\mu) \rightrightarrows \mathbb R^\ell$ is a
  closed-valued and lower measurable correspondence, then it has
  a measurable selection.}

  \bigskip
  \noindent
  {\bf Aumann-Saint-Beuve Measurable Selection Theorem
  (\cite{Aumann:69,Saint:74}).} \emph{Let $(T,\Sigma,\mu)$
  be a complete finite measure space, $B \subseteq \mathbb R^\ell$
  be a Borel subset. If $F:T \rightrightarrows B$ has a measurable
  graph, there exists a measurable function $f: T \to B$ such that $f(t)\in F(t)$ for all $t\in T$".}

  \bigskip
  Of course, the above two theorems were stated in more general
  forms in the literature. But here, they are adapted and presented
  in particular and simpler forms to fit in this paper.

  \bigskip
  \noindent
  {\large \sc Acknowledgement.} The authors are very grateful to
  He Wei for his valuable comments and suggestions on the early
  draft of the paper.

% For one-column wide figures use
%\begin{figure}
% Use the relevant command to insert your figure file.
% For example, with the graphicx package use
%\includegraphics{example.eps}
% figure caption is below the figure
%\caption{Please write your figure caption here}
%\label{fig:1}       % Give a unique label
%\end{figure}
%
% For two-column wide figures use
%\begin{figure*}
% Use the relevant command to insert your figure file.
% For example, with the graphicx package use
%\includegraphics[width=0.75\textwidth]{example.eps}
% figure caption is below the figure
%\caption{Please write your figure caption here}
%\label{fig:2}       % Give a unique label
%\end{figure*}
%
% For tables use
%\begin{table}
% table caption is above the table
%\caption{Please write your table caption here}
%\label{tab:1}       % Give a unique label
% For LaTeX tables use
%\begin{tabular}{lll}
%\hline\noalign{\smallskip}
%first & second & third  \\
%\noalign{\smallskip}\hline\noalign{\smallskip}
%number & number & number \\
%number & number & number \\
%\noalign{\smallskip}\hline
%\end{tabular}
%\end{table}

%\begin{acknowledgements}
%If you'd like to thank anyone, place your comments here
%and remove the percent signs.
%\end{acknowledgements}

% BibTeX users please use one of
\bibliographystyle{spbasic}      % basic style, author-year citations
%\bibliographystyle{spmpsci}      % mathematics and physical sciences
%\bibliographystyle{spphys}       % APS-like style for physics
%\bibliography{}   % name your BibTeX data base

% Non-BibTeX users please use

 \end{document}